\documentclass[twoside,a4paper,reqno,12pt]{amsart}

\usepackage{cite}
\usepackage{mathrsfs}
 \usepackage{booktabs}
\usepackage{amsmath}
\usepackage{amsthm}
\usepackage{latexsym}
\usepackage{amssymb,bm}
\usepackage{amsfonts}
\usepackage{longtable}
\usepackage{multirow, hyperref}
\usepackage{braket}
\usepackage{array}
\usepackage{amsmath,enumerate}

\usepackage{graphics}

\def\l{\langle}
\def\r{\rangle}

\usepackage{color}

\def\NN{\mathbb N}
\def\PP{\mathbb{P}}
  
\def\Aut{{\sf Aut}} 
 
\def\PSL{{\rm PSL}}
\def\PSU{{\rm PSU}}

\def\calI{\mathcal{I}}
\def\calA{\mathcal{A}}
\def\calS{\mathcal{S}}
\def\calG{\mathcal{G}}
\def\calR{\mathcal{R}}

\def\calH{\mathcal{H}}
\def\excalR{\mathcal{R}^\mathrm{ex}}
\def\calHG{\mathcal{H}\mathcal{G}}
\def\calHR{\mathcal{H}\mathcal{R}}

\def\C{{\bf C}}
\def\Z{{\bf Z}}

\newtheorem{thm}{Theorem}[section]
\newtheorem{que}[thm]{Question}

\newtheorem{theorem}[thm]{Theorem}

\newtheorem{lemma}[thm]{Lemma}

\newtheorem{corollary}[thm]{Corollary}

\theoremstyle{remark}
\newtheorem{rem}[thm]{Remarks}

\allowdisplaybreaks[4]

\begin{document}
	\title[Proportion of chiral maps]
	{Proportion of chiral maps with automorphism group $\mathcal{S}_n$ and $\mathcal{A}_n$}

	\author{Jiyong Chen}
	\author{Yi Xiao Tang}
	
	\address{J.~Chen, School of Mathematical Sciences\\
		Xiamen University \\
		Xiamen 361005\\
		P. R. China}
	\email{chenjy1988@xmu.edu.cn}

	\address{Y.X.~Tang, School of Mathematical Sciences\\
		Xiamen University \\
		Xiamen 361005\\
		P. R. China}
	\email{cj11789@163.com}

\date\today

\begin{abstract}
    Orientably-regular maps are highly symmetric embeddings of graphs in oriented surfaces. Among them, \emph{chiral} maps are those which fail to be isomorphic to their mirror images.

    We prove that, as $n\to\infty$, chirality is generic for orientably-regular maps with automorphism groups $\calS_n$ or $\calA_n$: the proportion of chiral maps tends to $1$ in both families. We also obtain the corresponding asymptotic result for orientably-regular hypermaps with automorphism groups
    $\calS_n$ or $\calA_n$.

    A key ingredient is a sharp asymptotic generation statement: if one chooses an involution of $\calS_n$ uniformly at random and then chooses an independent uniformly random element of $\calS_n$, the probability that these two elements generate $\calS_n$ and $\calA_n$ tends to $\frac{3}{4}$ and $\frac{1}{4}$  as $n\to\infty$, respectively.
    
	\bigskip
	\noindent{\bf Key words:}  orientably-regular maps, chiral maps, hypermaps, proportion, symmetric and alternating group  \\
	\noindent{\bf 2020 Mathematics subject classification:}  57M15, 05C10, 05E18, 20B25.	
\end{abstract}
		\maketitle

\section{Introduction}
An \emph{oriented map} (or simply \emph{map}) $\mathcal{M}$ is an embedding of a graph $\Gamma$ into an oriented surface $\mathcal{S}$ such that each connected component of $\mathcal{S}\setminus\Gamma$ is homeomorphic to an open disk. 
An \emph{automorphism} of the map $\mathcal{M}$ is a permutation on the sets of vertices, edges and faces of $\mathcal{M}$ which is induced by an orientation-preserving homeomorphism that also preserves vertices, edges and faces. 
The \emph{automorphism group} of a map $\mathcal{M}$, denoted by $\Aut(\mathcal{M})$, is the group consisting of all automorphisms of $\mathcal{M}$. 
A map $\mathcal{M}$ is called \emph{orientably-regular} if $\Aut(\mathcal{M})$ acts transitively on the set of incident vertex-edge pairs. 

Research on orientably-regular maps has a long history, dating back to the work of Burnside and others in the late 19th century, and they continue to be a widely studied class of combinatorial structures. 
We refer the interested reader to the survey paper \cite{Siran-survey} and the references therein for further details.

A map is called \emph{reflexible} if there exists an orientation-reversing homeomorphism of the supporting surface that preserves its vertices, edges and faces.
A \emph{chiral map} is an orientably-regular map that is not reflexible.  Extensive research has been done on chiral maps; see, for example, \cite{chiral-psl3-psu3,conder-regular-jems-2010,conder-chiral-2010,Conder-chiral-2016,Jones-totally-chiral-2009}.

From the definitions of chiral and reflexible maps, one would expect that within a given class of maps (e.g., restricted by the genus of the surface or by the automorphism group), the proportion of reflexible maps should be relatively small, as reflexibility requires an additional symmetry condition. 
The sphere supports no chiral orientably-regular maps, while the torus admits both infinitely many reflexible orientably-regular maps and chiral  maps. For surfaces of hyperbolic type, Macbeath constructed infinitely many orientably-regular maps that attain the Hurwitz bound in \cite{macbeath-1961,macbeath-1969}. None of these maps are chiral. It is natural to ask what the proportion of chiral maps is within a given family of orientably-regular maps. One of the ways is to consider the family of orientably-regular maps with given automorphism groups. 
This leads to the following question.

\begin{que}\label{que-1}
    Let $\mathcal{F}$ be a family of finite groups. For a group $G\in \mathcal{F}$, let $\mathbb{P}_{ch}(G)$ denote the proportion (up to isomorphism) of chiral maps among all orientably-regular maps with automorphism group $G$.
    What is the asymptotic behavior of $\mathbb{P}_{ch}(G)$ as $|G|\to \infty$?
\end{que}

Given a finite group $G$, a pair of elements $(x,y)\in G\times G$ is called a $(m,n)$-pair of $G$ if $|x|=m$ and $|y|=n$, and is called a $(m,n)$-generating pair of $G$ if we further assume that $\l x,y\r=G$. We use $*$ to denote unspecified order. 
There is a well-known correspondence between an orientably-regular map and its automorphism group $G$ together with a $(2,*)$-generating pair of $G$. Specifically, given a group $G$ and a $(2,*)$-generating pair $(x,y)$ of $G$, there is a unique orientably-regular map associated with them, denoted by $\mathcal{M}(G,x,y)$. Conversely, any orientably-regular map $\mathcal{M}$ with automorphism group  $G$  is isomorphic to $\mathcal{M}(G,x,y)$ for some $(2,*)$-generating pair $(x,y)$. Two maps $\mathcal{M}(G,x_1,y_1)$ and $\mathcal{M}(G, x_2, y_2)$ are isomorphic if and only if there exists a group automorphism $\tau\in \Aut(G)$ such that $(x_1^\tau,y_1^\tau)=(x_2, y_2)$. Moreover, the map $\mathcal{M}(G,x,y)$ is \emph{reflexible} if and only if there is a group automorphism $\sigma\in \Aut(G)$ such that $(x^\sigma,y^\sigma)=(x, y^{-1})$. If there is no such automorphism, then this map $\mathcal{M}(G,x,y)$ and the corresponding generating pair $(x,y)$ are called \emph{chiral}. 
 See \cite{Jones-map-theory} for more details. 

Let $\Delta(G)$ be the set of $(2,*)$-generating pairs of $G$ and let $\Delta_{ch}(G)$ be the subset of chiral $(2,*)$-generating pairs.
Consider the action of $\Aut(G)$ on the set $\Delta(G)$. It is easy to see that two maps $\mathcal{M}(G,x_1,y_1)$ and $\mathcal{M}(G, x_2, y_2)$ are isomorphic if and only if they are in the same $\Aut(G)$-orbit.  Note that this action is semiregular. Hence 
\[\mathbb{P}_{ch}(G)=\frac{|\Delta_{ch}(G) |/ |\Aut(G)| }{|\Delta(G)|/|\Aut(G)|}=\frac{|\Delta_{ch}(G) | }{|\Delta(G)|}. \]
This simple observation tells us that, in order to compute $\mathbb{P}_{ch}(G)$, we only need to compute the sizes of $\Delta_{ch}(G)$ and $\Delta(G)$.

It is proved in \cite{king-2017} that all non-abelian simple groups are generated by an involution and an element of prime order. Hence, every non-abelian simple group is the automorphism group of some orientably-regular map. It is interesting to consider Question~\ref{que-1} for a given family of non-abelian simple groups, or more generally for a given family of almost simple groups. Combining \cite[Theorem 1.1]{Chiralpolyhedra}  with results from \cite{chiral-psl3-psu3}, we have that for any simple group $G$, the proportion $\mathbb{P}_{ch}(G)=0$ if and only if $G$ is isomorphic to $\calA_7$, $\PSL_2(q)$, $\PSL_3(q)$ or $\PSU_3(q)$. It was proved in \cite{Conder-chiral-2016} that for every given hyperbolic type, there exists a chiral map  with automorphism group $\mathcal{A}_n$ or $\mathcal{S}_n$ for some $n$.  See \cite{chiral-hurwitz-2024} for more constructions.

In this paper, we answer Question~\ref{que-1} for the symmetric and alternating groups. Specifically, we prove the following theorem.

\begin{theorem}\label{main-thm-1}
For the proportions $\mathbb{P}_{ch}(\mathcal{S}_n)$ and $\mathbb{P}_{ch}(\mathcal{A}_n)$ of chiral maps as defined above, we have
\[
1 - \mathbb{P}_{ch}(\mathcal{S}_n), \ 1 - \mathbb{P}_{ch}(\mathcal{A}_n) \in O\!\left(4^n \cdot n^{- 0.25n}\right),
\]
and consequently,
\[
\lim_{n \to \infty} \mathbb{P}_{ch}(\mathcal{S}_n) = \lim_{n \to \infty} \mathbb{P}_{ch}(\mathcal{A}_n) = 1.
\]
\end{theorem}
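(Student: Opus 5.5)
The plan is to bound $|\Delta(G)\setminus\Delta_{ch}(G)|$ from above and $|\Delta(G)|$ from below, for $G=\calS_n$ and $G=\calA_n$ with $n\ge 7$, so that $\Aut(G)=\calS_n$ acts by conjugation.

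\textbf{Counting reflexible pairs.} A pair $(x,y)\in\Delta(G)$ is reflexible exactly when some $\sigma\in\calS_n$ satisfies $x^\sigma=x$ and $y^\sigma=y^{-1}$. Since $\langle x,y\rangle$ is $\calA_n$ or $\calS_n$, its centraliser in $\calS_n$ is trivial. Hence $\sigma$ is unique and $\sigma^2=1$, because $\sigma^2$ centralises both $x$ and $y$. In the degenerate case $\sigma=1$ we get $y=y^{-1}$, so $G$ is generated by two involutions and is dihedral, which is impossible. So $\sigma$ is a genuine involution.

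Now set $z=\sigma y$. Then $z^2=\sigma y\sigma y=y^{-1}y=1$, so $y=\sigma z$ is a product of two involutions, and $\sigma$ commutes with $x$. This gives an injection
\[
(x,y)\mapsto(x,\sigma,z)
\]
from reflexible pairs into triples of elements of $\calS_n$ with $x^2=\sigma^2=z^2=1$ and $[x,\sigma]=1$. Let $I_n$ be the number of elements of order dividing $2$ in $\calS_n$. Then the number of such triples is at most $I_n\cdot\max_x|C_{\calS_n}(x)|\cdot I_n$, up to a refinement described below.

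\textbf{The main obstacle.} A crude bound fails because $|C(x)|$ can be as large as $2^k k!(n-2k)!$. So I will not bound the centraliser pointwise but sum over conjugacy classes of $x$ instead. The number of pairs $(x,\sigma)$ of commuting involutions equals $|\calS_n|$ times the number of conjugacy classes of $\langle x\rangle$-equivariant data; equivalently, it is the number of homomorphisms $C_2^2\to\calS_n$. By standard asymptotics (Chowla--Herstein--Moore, and Wohlfahrt or M\"uller for $|\mathrm{Hom}(C_2^2,\calS_n)|$), this is roughly $(n!)^{3/4}e^{O(n^{1/2})}$. Since $I_n\approx (n!)^{1/2}e^{\sqrt n}$, the total number of reflexible pairs is at most
\[
(n!)^{5/4}c^{n}
\]
for some constant $c$. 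I would verify this with an elementary estimate producing the explicit $4^n$ factor: write $|\mathrm{Hom}(C_2^2,\calS_n)|\le$ a sum over orbit-type decompositions $n=a+2b+4d$, and bound each term by $n!/(a!\,b!\,d!)$ times $2^{O(n)}$.

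\textbf{Lower bound and conclusion.} By the generation statement announced in the abstract (and proved later in the paper), a random involution together with a random element generates $\calS_n$ or $\calA_n$ with probability tending to $1$. It follows that $|\Delta(\calS_n)|\ge c'\,I_n\,n!$, giving roughly $(n!)^{3/2}$ pairs. For $\calA_n$ I will use the analogous count of involutions in $\calA_n$ paired with elements of $\calA_n$; the same generation result applies, since $\calA_n$ arises with probability $1/4$ there and contains a positive proportion of the involutions.

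The ratio is then at most
\[
\frac{(n!)^{5/4}c^n}{(n!)^{3/2}}=(n!)^{-1/4}c^n.
\]
By Stirling's formula this lies in $O\!\left(4^n n^{-0.25n}\right)$ after fixing constants, since $(n!)^{-1/4}\approx n^{-n/4}e^{n/4}$ and $e^{1/4}c\le 4$ once the counting is tight enough. The delicate step is making the commuting-involution count sharp enough, with constant base at most $4e^{-1/4}$. If needed I would tolerate a larger base and then absorb it, by checking the bound on $\sum_{a+2b+4d=n}n!/(a!\,b!\,d!)$ directly.
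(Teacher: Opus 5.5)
Your proposal is correct and is essentially the paper's proof. The injection $(x,y)\mapsto(x,\sigma,\sigma y)$ amounts to the bound $|\calR_n|\le|\excalR_n|=n!\,(I(n)+1)\sum_q S(n,q)$ of Lemma~\ref{21}, and the denominators for both $\calS_n$ and $\calA_n$ come from Theorem~\ref{thm-byproduct} just as you describe.

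The only difference is how the commuting-involution count is estimated. The paper conditions on the cycle type of $\sigma$, bounds each $S(n,q)$ separately via Lemma~\ref{216}, and combines the terms with Jensen's inequality, giving base $3^{1.25}<4$. Your global count is $|\mathrm{Hom}(C_2^2,\calS_n)|/n!=[x^n]e^{x+\frac32x^2+\frac14x^4}\le e^{n/4+1.5\sqrt n+n^{1/4}}n^{-n/4}$, using Lemma~\ref{19} with $r=n^{1/4}$. This gives the sharper base $e^{1/4}$ without any delicacy, so the fallback of ``absorbing'' a larger base is never needed. It would not be legitimate anyway, since a base above $4e^{-1/4}$ in your $(n!)^{5/4}c^n$ bound cannot be absorbed into $O(4^n n^{-0.25n})$.
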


The theory of \emph{orientably-regular hypermaps} generalizes that of maps. An analogous correspondence exists between such hypermaps and generating pairs of their automorphism groups. Specifically, to any finite group $G$ and any $(*,*)$-generating pair $(x, y)$ (that is with no restriction on the orders), one can associate a unique orientably-regular hypermap $\mathcal{H}(G, x, y)$. Conversely, every orientably-regular hypermap $\mathcal{H}$ with automorphism group $G$ is isomorphic to $\mathcal{H}(G, x, y)$ for some generating pair $(x, y)$. The reflexibility condition is also similar: $\mathcal{H}(G, x, y)$ is reflexible if and only if there exists $\sigma \in \Aut(G)$ such that $(x^\sigma, y^\sigma) = (x^{-1}, y^{-1})$. 
A hypermap is called a chiral map if it is orientably-regular and not reflexible.
For details, see \cite{hypermap-chiral-2009}.

In \cite{Hypermaps}, Lucchini and Spiga pose a hypermap version of Question~\ref{que-1}. They also prove that,   for any simple group $G$, the proportion $\mathbb{P}_{ch\text{-}\mathcal{H}}(G)=0$ if and only if $G$ is isomorphic to  $\PSL_2(q)$. Here, $\mathbb{P}_{ch\text{-}\mathcal{H}}(G)$ denotes the proportion of chiral hypermaps among all orientably-regular hypermaps with automorphism group $G$. In this paper, we also answer the question of Lucchini and Spiga for symmetric and alternating groups, as presented in the following theorem.

\begin{theorem}\label{main-thm-2}
For the proportions $\mathbb{P}_{ch\text{-}\mathcal{H}}(\mathcal{S}_n)$ and $\mathbb{P}_{ch\text{-}\mathcal{H}}(\mathcal{A}_n)$ of chiral hypermaps as defined above, we have
\[
1 - \mathbb{P}_{ch\text{-}\mathcal{H}}(\mathcal{S}_n), \ 1 - \mathbb{P}_{ch\text{-}\mathcal{H}}(\mathcal{A}_n) \in O\!\left(10^n \cdot n^{0.5 - 0.5n}\right),
\]
and consequently,
\[
\lim_{n \to \infty} \mathbb{P}_{ch\text{-}\mathcal{H}}(\mathcal{S}_n) = \lim_{n \to \infty} \mathbb{P}_{ch\text{-}\mathcal{H}}(\mathcal{A}_n) = 1.
\]
\end{theorem}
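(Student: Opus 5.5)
We need to show that reflexible generating pairs $(x,y)$ of $G\in\{\calS_n,\calA_n\}$, with no order restriction, form a vanishing proportion of all generating pairs. We can take $n\ge 7$, so that $\Aut(G)=\calS_n$ in both cases. We bound the numerator $|\Delta(G)\setminus\Delta_{ch}(G)|$ from above and the denominator $|\Delta(G)|$ from below. For the denominator we use Dixon's theorem: two random elements of $\calS_n$ generate $\calA_n$ or $\calS_n$ with probability $1-1/n+O(n^{-2})$. Hence the number of $(*,*)$-generating pairs of $\calS_n$ is at least $c\,(n!)^2$, and likewise at least $c\,(n!/2)^2$ for $\calA_n$. (The analogous map statement, Theorem~\ref{main-thm-1}, uses instead the involution–random element generation result from the abstract.)

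For the numerator, suppose $(x,y)$ is reflexible. Then some $\sigma\in\calS_n$ satisfies $x^\sigma=x^{-1}$ and $y^\sigma=y^{-1}$. Since $\l x,y\r=G$ has trivial centralizer in $\calS_n$, this $\sigma$ is unique. Applying $\sigma$ twice gives $\sigma^2$ centralizing $x$ and $y$, so $\sigma^2=1$, and $\sigma\neq1$ because $G$ is nonabelian. Thus $\sigma$ is an involution of $\calS_n$ inverting both $x$ and $y$. Equivalently, $x=\sigma\tau_1$ and $y=\sigma\tau_2$, where $\tau_1=\sigma x$ and $\tau_2=\sigma y$ are themselves involutions or the identity. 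So the reflexible pairs inject into triples $(\sigma,\tau_1,\tau_2)$ of elements of order at most $2$ in $\calS_n$, and their number is at most $I(n)^3$, where $I(n)=\#\{g\in\calS_n: g^2=1\}$.

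The classical asymptotic is $I(n)\sim \frac{1}{\sqrt2}\,(n/e)^{n/2}e^{\sqrt n-1/4}$, and we will use the crude bound $I(n)\le C\cdot c_0^n\, n^{n/2}e^{-n/2}$. Then
\[
\frac{I(n)^3}{(n!)^2}\le C'\,\frac{n^{1.5n}e^{-1.5n}c_0^{3n}}{n^{2n}e^{-2n}\cdot 2\pi n}= O\!\left(K^n n^{-0.5n-1}\right).
\]
Factors of $4$ from the $\calA_n$ case are absorbed into $K^n$. Tracking the constants should give $K\le 10$; any slack is harmless, since the weaker bound with $n^{0.5}$ in the stated exponent already suffices. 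This yields both limits.

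The main obstacle is making the counting fully rigorous with explicit constants. This means giving an elementary bound such as $I(n)\le \sqrt{n!}\cdot \text{const}^n$, for example from $I(n)=\sum_k \frac{n!}{k!\,2^k(n-2k)!}$ together with Stirling. It also means handling small $n$ and the cases $n=6$, where $\Aut(\calA_6)$ is larger, and $\calS_6$, where the outer automorphism appears. These finitely many exceptions do not affect the $O$-statement. A second point to check is the uniqueness and involution argument for $\sigma$ when $G=\calA_n$: the centralizer of $\calA_n$ in $\calS_n$ is trivial for $n\ge4$, so the argument goes through.
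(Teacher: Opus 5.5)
Your proposal is correct and follows the paper's proof. The paper also sends a reflexible pair $(x,y)$ injectively to $(g, xg, yg)$, where $g$ is the unique inverting involution, bounds the count by $(I(n)+1)^3$ (your $I(n)^3$, since your $I(n)$ includes the identity), and uses Dixon for the denominator. The only difference is in estimating $(I(n)+1)^3/(n!)^2=n!\,([x^n]e^{x+0.5x^2})^3$: the paper uses Stirling together with the Cauchy estimate $[x^n]e^{x+0.5x^2}\le 3^n/n^{0.5n}$. You instead plug in the classical asymptotic for $I(n)$, which gives an even sharper bound and makes your ``explicit constants'' worry unnecessary for an $O$-statement.

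One small slip: $\sigma\neq 1$ holds because $G$ is not dihedral, not merely because it is nonabelian. This is harmless, since your count already allows $\sigma=1$.
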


Our proof of Theorem~\ref{main-thm-1} (resp. Theorem~\ref{main-thm-2}) relies  on the probability that a $(2,*)$-pair (resp. $(*,*)$-pair) generates the whole group $\calS_n$ or $\calA_n$. 
Dixon proved in \cite{dixon-1969} that the probability for a $(*,*)$-pair of $\calS_n$ to generate $\calS_n$ or $\calA_n$ tends to $1$ as $n\to \infty$.
In \cite{Liebeck-Shalev-23gen}, Liebeck and Shalev proved that a $(2,*)$-pair of $\calA_n$ generates $\calA_n$ with probability tending to $1$ as $n\to \infty$. However, there is no similar result for the probability that a $(2,*)$-pair generates $\calS_n$. We prove in the following lemma that a $(2,*)$-pair of $\calS_n$ almost surely generates either $\calS_n$ or $\calA_n$  as $n$ tends to $\infty$. Consequently, the probabilities of generating $\calS_n$ and $\calA_n$ tend to $\frac{3}{4}$ and $\frac{1}{4}$, respectively. Our proof follows the strategy used in \cite{dixon-1969}.
% , and reproof Liebeck and Shalev's result in \cite{Liebeck-Shalev-23gen} for $\calA_n$.
% this probability  tends to $\frac{3}{4}$ in the following theorem, which is a byproduct of this paper. 

\begin{theorem}\label{thm-byproduct}
    The probability that a randomly chosen involution of $\calS_n$ and a randomly chosen additional element generate either $\calS_n$ or $\calA_n$  tends to $1$  as $n\to \infty$. In particular, the probabilities that such a pair generates  $\calS_n$ and $\calA_n$ approach $\frac{3}{4}$ and $\frac{1}{4}$, respectively.
\end{theorem}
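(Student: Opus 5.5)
Following Dixon's strategy \cite{dixon-1969}, we choose an involution $x$ uniformly from the involutions of $\calS_n$ and an independent uniform $y\in\calS_n$, and show that with probability $1-o(1)$ the group $H=\l x,y\r$ is transitive, primitive, and contains a prime cycle of length $p<n-2$. Jordan's theorem then gives $H\ge \calA_n$. Once this holds, the split $\frac34:\frac14$ is a parity count. By a classical result (Moser--Wyman asymptotics for involution counts), a uniform involution is odd with probability tending to $\frac12$. Independently, $y$ is odd with probability exactly $\frac12$. Now $H\le\calA_n$ exactly when both $x$ and $y$ are even, which has probability $\to\frac14$, and otherwise $H=\calS_n$. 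Hence the limits are $\frac34$ and $\frac14$.

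\emph{Step 1 (transitivity).} The pair $(x,y)$ fails to be transitive only if there is a proper subset $\Omega$ with $1\le|\Omega|=k\le n/2$ that is invariant under both $x$ and $y$. For a fixed $k$-set, the probability that $y$ preserves it is $\binom nk^{-1}$. The probability that $x$ preserves it is at most $1$, and we will improve this when needed. Summing over the $\binom nk$ subsets gives only $O(1)$ for each $k$, so we use both elements. Let $I_m$ denote the number of involutions in $\calS_m$ (counting the identity). The probability that $x$ preserves $\Omega$ is $I_kI_{n-k}/I_n$. Since $I_m\approx (m/e)^{m/2}e^{\sqrt m}$, this ratio is roughly $\binom nk^{-1/2}$. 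The failure probability is therefore bounded by $\sum_{k\le n/2}\binom nk\cdot\binom nk^{-1}\cdot I_kI_{n-k}/I_n$. The $k=1$ term is $O(n^{-1/2})$ and the remaining terms decay geometrically, so the total is $o(1)$.

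\emph{Step 2 (primitivity).} We bound the probability that both elements preserve a block system with $b$ blocks of size $a$, where $ab=n$ and $1<a<n$. The number of such systems is $n!/(a!^b b!)$. The probability that $y$ preserves a given system is $a!^b b!/n!$. The probability for $x$ is $(\#\text{involutions in } \calS_a\wr\calS_b)/I_n$, which is superexponentially small, of order $n^{-cn}$. Summing over the at most $n$ divisors $a$ gives $o(1)$.

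\emph{Step 3 (a prime cycle).} Here only $y$ is needed. As in Dixon, with probability $1-o(1)$ a uniform $y$ has a cycle of some prime length $p$ with $\sqrt n<p<n-2$ (say $p$ in a suitable range such as $(\log n)^2<p<n/2$) such that no other cycle length of $y$ is divisible by $p$. Then $y^{m}$, where $m$ is the lcm of the other cycle lengths, is a $p$-cycle. Combined with primitivity, Jordan's theorem gives $H\ge\calA_n$. This is a classical estimate on random permutations and can be quoted from \cite{dixon-1969}.

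The main obstacle is Step 1. The involution $x$ is far from uniform, so the bound $I_kI_{n-k}/I_n$ must be controlled uniformly in $k$, and the small-$k$ terms, where the estimate is only $O(n^{-1/2})$-type, must be summed carefully using precise involution-count asymptotics. The other steps are routine adaptations of \cite{dixon-1969}.
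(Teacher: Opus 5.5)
Your outline is correct and uses the same decomposition as the paper: bound the intransitive and the transitive-imprimitive proportions, invoke Dixon's prime-cycle lemma (Lemma~\ref{113}) with Jordan's theorem, and finish with the same parity count. The real difference is Step 1. The paper does not take a union bound. It counts pairs in $(\calI_n\cup\{1\})\times\calS_n$ by their orbit partition, turns this via Lemma~\ref{sumtransitive} into an exact recursion for $\PP_{\text{\rm int}}(n)$ (Lemma~\ref{recursive-formula-rn}), and then discards the subtracted term. What remains is essentially your first-moment sum $\sum_{k\le n/2}I_kI_{n-k}/I_n$. So your union bound is the more economical route to the same $O(n^{-1/2})$ estimate; the recursion additionally yields an exact formula and a lower bound of order $n^{-3/2}$, which the theorem does not need. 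One correction: the terms do not decay geometrically near $k=n/2$, since consecutive ratios there are about $\sqrt{(k+1)/(n-k)}\to 1$. The paper instead sums them by monotonicity, using that $I(m)/I(m-1)$ increases (Lemma~\ref{prior}(5),(6)). Hence every term with $k\ge 3$ is $O(n^{-3/2})$, and there are at most $n/2$ of them.

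Step 2 matches Lemma~\ref{defimp} in structure (the $y$-factor cancels the number of block systems), but your claimed rate is false. For $b=2$ the base group $\calS_{n/2}\times\calS_{n/2}$ already contains about $I_{n/2}^2$ involutions, so the ratio is about $2^{-n/2}e^{(\sqrt2-1)\sqrt n}$. In general the base group alone gives roughly $b^{-n/2}e^{\sqrt{bn}-\sqrt n}$, which is only exponentially small for bounded $b$. An exponential bound that is uniform over divisors does suffice, since there are at most $n$ of them, but it must be proved, and this is where the paper does real work. Block swaps are absorbed using $a!\le I_a^2/2$ (Lemma~\ref{prior}(1), with your $I_a$ counting the identity). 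This bounds the number of involutions in $\calS_a\wr\calS_b$ by $b!\,I_a^b\,[x^b]e^{x+x^2/4}$, and a split into large and small block sizes then gives $\PP_{\text{\rm imp}}(n)<3n\cdot 0.83^n e^{-\sqrt n}$ (Theorem~\ref{thm:imp}).

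In Step 3, the range $\sqrt n<p<n-2$ would not work. The expected number of cycles of $y$ of prime length in $(\sqrt n,n)$ is $\sum_{\sqrt n<p<n}1/p\to\ln 2<1$, so with probability at least about $1-\ln 2$ there is no such cycle. You need Dixon's range $(\ln n)^2\le p\le n-3$, as your parenthesis suggests. Finally, the parity fact you attribute to Moser--Wyman is proved in the paper (Lemma~\ref{117}) by a Cauchy estimate showing that $[x^n]e^{x-x^2/2}$ is negligible compared with $[x^n]e^{x+x^2/2}$.
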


The paper is organized as follows. In Section~\ref{sec:preliminaries}, we introduce some preliminaries, mainly about enumeration methods. Section~\ref{sec:gen-probability} proves Theorem~\ref{thm-byproduct}, which is crucial for proving Theorem~\ref{main-thm-1}. In the final section, we prove our main results, namely Theorem~\ref{main-thm-1} and Theorem~\ref{main-thm-2}.

\section{Preliminaries}\label{sec:preliminaries}

In this section, we present some useful facts and lemmas, as well as  basic enumeration methods that will be used later.

\subsection{Sizes of \texorpdfstring{$\mathcal{S}_n$}{Sn} and of the set of involutions \texorpdfstring{$\mathcal{I}_n$}{In}} \ 

The following lemma is a refined version of Stirling's formula and will be used frequently in what follows.
\begin{lemma}\label{lem:stirling}
For $n\geq 1$, we have
\begin{equation}\label{stirling1.1}
\frac{\sqrt{2\pi n}n^n}{e^n}< n!<\frac{1.1\sqrt{2\pi n}n^n}{e^n}.    
\end{equation}   
\end{lemma}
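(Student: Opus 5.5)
The natural route is the Robbins form of Stirling's formula. We need $\sqrt{2\pi n}\,n^n e^{-n} e^{1/(12n+1)} < n! < \sqrt{2\pi n}\,n^n e^{-n} e^{1/(12n)}$ for $n\ge 1$, from which the statement follows at once. The lower bound holds because $e^{1/(12n+1)}>1$. For the upper bound we use $e^{1/(12n)}\le e^{1/12}\approx 1.0869<1.1$. To keep the argument self-contained, I would prove the needed bounds directly rather than cite Robbins.

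Set $a_n=\log n! - (n+\tfrac12)\log n + n$, so that $n!=e^{a_n}\, n^{n+1/2}e^{-n}$. Then
\[
a_n-a_{n+1}=(n+\tfrac12)\log\frac{n+1}{n}-1 .
\]
Put $t=1/(2n+1)$, so that $\frac{n+1}{n}=\frac{1+t}{1-t}$. Using $\log\frac{1+t}{1-t}=2(t+t^3/3+t^5/5+\cdots)$ we get
\[
a_n-a_{n+1}=\frac{t^2}{3}+\frac{t^4}{5}+\cdots .
\]
This difference is positive, so $(a_n)$ is strictly decreasing. It is also bounded above by $\frac{t^2}{3}\cdot\frac{1}{1-t^2}=\frac{1}{12n(n+1)}=\frac{1}{12n}-\frac{1}{12(n+1)}$. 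Hence the sequence $a_n-\frac{1}{12n}$ is nondecreasing.

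We then use Wallis's product, or equivalently the standard evaluation of the limit, to get $\lim a_n=\log\sqrt{2\pi}$. Since $a_n$ decreases strictly to its limit, $a_n>\log\sqrt{2\pi}$, which gives the left inequality. Since $a_n-\frac1{12n}$ increases to the same limit, $a_n\le \log\sqrt{2\pi}+\frac{1}{12n}\le\log\sqrt{2\pi}+\frac1{12}$. Then $e^{1/12}<1.1$ gives the right inequality, and the check $e^{1/12}<1.1$ reduces to $\frac1{12}<\log 1.1\approx0.0953$.

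The only step that is not elementary is identifying the limit constant $\sqrt{2\pi}$ via Wallis's formula; I would simply cite it as classical. As a sanity check on the edge case, at $n=1$ the bounds read $0.922<1<1.014$.
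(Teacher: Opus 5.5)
Your proof is correct and takes essentially the paper's route: the paper cites Robbins' bound $n! = \sqrt{2\pi n}\,(n/e)^n e^{r_n}$ with $\frac{1}{12n+1} < r_n < \frac{1}{12n}$, then uses $e^{r_n} > 1$ and $e^{1/12} < 1.1$. The only difference is that you re-derive the part actually needed, $0 < r_n < \frac{1}{12n}$, from the monotone sequences $a_n$ and $a_n - \frac{1}{12n}$ and Wallis's formula instead of citing it; the sharper lower bound $\frac{1}{12n+1}$ is never used, so you are right not to prove it.
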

\begin{proof}Robbins proved in \cite{Stirling} that for any integer $n\geq 1$, there exists $r_n\in\big(\frac{1}{12n+1},\frac{1}{12n}\big)$ such that
\[
 n!=\sqrt{2\pi n} \frac{n^n}{e^n}e^{r_n}.
\]
Hence, for $n\geq 1$,
\[
\frac{\sqrt{2\pi n}n^n}{e^n}\leq n!<\sqrt{2\pi n} \frac{n^n}{e^n}e^{\frac{1}{12}}<\frac{1.1\sqrt{2\pi n}n^n}{e^n}.
\]
This completes the proof.
\end{proof}

From now on, we always denote the set of involutions in $\mathcal{S}_n$ by $\mathcal{I}_n$ and denote the cardinality of $\mathcal{I}_n$ by $I(n)$.
% Let $\mathcal{I}_n$ be the set of involutions in $\mathcal{S}_n$ and let $I(n)$ be the cardinality of $\mathcal{I}_n$. 
The following lemma gives several basic facts about $I(n)$.

\begin{lemma}\label{basic fact}
The following statements about $I(n)$ hold:
\begin{enumerate}[\rm(1)]
\item {\rm\cite[Theorem 2.1]{Involutions}} The function $I(n)$ satisfies the recurrence
\begin{equation} \label{recur-in}    
I(n)=I(n-1)+(n-1)(I(n-2)+1),
\end{equation}
with initial conditions $I(0)=I(1)=0.$
\item {\rm\cite[Theorem~2.2]{Involutions}}  $I(n)+1=n![x^n]e^{x+0.5x^2}.$    
\item {\rm\cite[Theorem~8.1]{Involutions}}  $\displaystyle\lim_{n \to\infty}\frac{I(n)+1}{\frac{1}{\sqrt{2}}n^{\frac{n}{2}} e^{-\frac{n}{2} + \sqrt{n} -\frac{1}{4}}}=\lim_{n \to\infty}\frac{I(n)}{\frac{1}{\sqrt{2}}n^{\frac{n}{2}} e^{-\frac{n}{2} + \sqrt{n} -\frac{1}{4}}}=1.$
\end{enumerate}
\end{lemma}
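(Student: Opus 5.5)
The plan is to work throughout with $J(n):=I(n)+1$, the number of permutations $x\in\mathcal{S}_n$ with $x^2=1$ (identity included). This explains the convention $I(0)=I(1)=0$, since then $J(0)=J(1)=1$. Parts (1) and (2) are then elementary counting statements about $J(n)$, and part (3) is a saddle-point estimate for the coefficients of the generating function found in (2).

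For (1), I will classify the elements $x$ with $x^2=1$ by what $x$ does to the point $n$. If $n$ is fixed, the restriction of $x$ to $\{1,\dots,n-1\}$ is an arbitrary solution of $x^2=1$ in $\mathcal{S}_{n-1}$, which gives $J(n-1)$ choices. Otherwise $n$ lies in a transposition $(i\,n)$ with $i\in\{1,\dots,n-1\}$. In that case the remaining $n-2$ points carry an arbitrary solution of $x^2=1$, which gives $(n-1)J(n-2)$ choices. Hence
\[
J(n)=J(n-1)+(n-1)J(n-2).
\]
Substituting $J=I+1$ yields exactly \eqref{recur-in}.

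For (2), set $F(x)=\sum_{n\ge0}J(n)x^n/n!$. Multiplying the recurrence by $x^{n-1}/(n-1)!$ and summing over $n\ge1$ gives $F'(x)=(1+x)F(x)$. Together with $F(0)=1$ this forces $F(x)=e^{x+x^2/2}$. Alternatively, the exponential formula applies directly, since such an $x$ is a set of cycles of lengths $1$ and $2$. Either way $J(n)=n![x^n]e^{x+0.5x^2}$.

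For (3), I would apply Hayman's theorem (equivalently the Moser–Wyman saddle-point analysis) to $F(z)=e^{z+z^2/2}$. This function is Hayman-admissible because it is the exponential of a polynomial with positive leading coefficient. Set $a(r)=r+r^2$ and $b(r)=r+2r^2$. Let $r_n$ be the positive root of $r_n+r_n^2=n$, so $r_n=\sqrt{n+1/4}-1/2=\sqrt n-\tfrac12+\tfrac1{8\sqrt n}+O(n^{-3/2})$. Hayman's theorem then gives
\[
[z^n]F\sim \frac{F(r_n)}{r_n^{\,n}\sqrt{2\pi b(r_n)}}.
\]
The main obstacle is the bookkeeping in this expansion, carried out to $o(1)$ accuracy in the exponent. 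The quantities $r_n+r_n^2/2$ and $n\log r_n$ must each be expanded to order $O(n^{-1/2})$. The terms of order $n\log n$, $n$ and $\sqrt n$ should combine to $-\tfrac n2\log n+\tfrac n2+\sqrt n-\tfrac14$. The prefactor satisfies $\sqrt{2\pi b(r_n)}\sim 2\sqrt{\pi n}$. Multiplying by $n!$ via Stirling's formula (Lemma~\ref{lem:stirling}) and collecting terms should leave $\tfrac1{\sqrt2}n^{n/2}e^{-n/2+\sqrt n-1/4}$, which is the first limit. The second limit follows immediately: the denominator tends to infinity, so replacing $I(n)+1$ by $I(n)$ does not change the ratio's limit. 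Since all three parts are classical, in the paper it also suffices simply to cite \cite{Involutions} for them, as the statement does.
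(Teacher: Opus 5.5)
Your proof is correct. It differs from the paper only in that the paper gives no argument at all: it cites the reference named in the statement (Theorems 2.1, 2.2 and 8.1 there) for all three parts.

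Your version is self-contained:
\begin{itemize}
\item a bijective recurrence for $J(n)=I(n)+1$ in (1);
\item the differential equation $F'=(1+x)F$ for the exponential generating function in (2);
\item Hayman's saddle-point theorem for the asymptotics in (3).
\end{itemize}

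I checked the bookkeeping in (3). Since $r_n^2=n-r_n$, one has $\log F(r_n)=\tfrac n2+\tfrac{r_n}{2}=\tfrac n2+\tfrac{\sqrt n}{2}-\tfrac14+O(n^{-1/2})$. Also $n\log r_n=\tfrac n2\log n-\tfrac{\sqrt n}{2}+O(n^{-1/2})$, because the $1/n$ terms in the expansion of $\log(r_n/\sqrt n)$ cancel. Finally $b(r_n)=2n-r_n$. Together these give $[z^n]F\sim \frac{e^{-1/4}}{2\sqrt{\pi n}}\,n^{-n/2}e^{n/2+\sqrt n}$, and multiplying by $n!$ yields exactly $\frac{1}{\sqrt2}n^{n/2}e^{-n/2+\sqrt n-1/4}$.

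Your route makes visible where the constants $1/\sqrt2$ and $e^{-1/4}$ come from; the citation buys brevity. Part (3) still rests on an external black box (Hayman's theorem), so the real gain in self-containment is in (1) and (2).

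Two small points need fixing.
\begin{itemize}
\item \textbf{Admissibility criterion.} Your stated reason for H-admissibility is not the right criterion. The function $e^{z^2}$ is the exponential of a polynomial with positive leading coefficient, yet it is not H-admissible, since its odd coefficients vanish. The standard sufficient condition for $e^{P(z)}$ with $P$ real is that $[z^n]e^{P(z)}>0$ for all large $n$. Here this holds by your part (2), since $[z^n]F=J(n)/n!>0$.
\item \textbf{Stirling's formula.} Lemma~\ref{lem:stirling} as stated only pins $n!$ within a factor $1.1$ of $\sqrt{2\pi n}\,n^ne^{-n}$. That would determine the limit in (3) only up to a factor in $[1,1.1]$. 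You need the genuine asymptotic $n!\sim\sqrt{2\pi n}\,n^ne^{-n}$, for instance Robbins' form with the factor $e^{r_n}$, $r_n\to0$, which appears in that lemma's proof.
\end{itemize}
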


Using the basic facts stated in this lemma, we obtain several further properties of $I(n)$ below.

\begin{lemma}\label{prior}
The following statements about $I(n)$ hold:
\begin{enumerate}[\rm(1)]
\item For $n\geq 2$, we have $(1+I(n))^2\geq 2n!$.
\item For $n\geq 7$, we have
\[
\frac{I(n)}{I(n-1)} < \sqrt{n+\frac{1}{4}}+\frac{1}{2}.
\]
\item For $n\geq 3$, we have
\[
\frac{I(n)}{I(n-1)} > \sqrt{n-\frac{3}{4}}+\frac{1}{2}.
\]
\item We have
\[
\lim_{n\to\infty}\frac{1}{\sqrt{n}}\frac{I(n)}{I(n-1)}=1.
\]
\item For $n\geq 5$, the function $Q(n):=\frac{I(n)}{I(n-1)}$ is strictly increasing.
\item For $n\geq 10$ and $2\leq i \leq \lfloor \frac{n-1}{2}\rfloor$, we have $I(i+1)I(n-i-1) < I(i)I(n-i)$.
\end{enumerate}
\end{lemma}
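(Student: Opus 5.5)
Throughout, write $J(n)=I(n)+1$, which counts all involutions including the identity, and $Q(n)=I(n)/I(n-1)$. By \eqref{recur-in}, $J(n)=J(n-1)+(n-1)J(n-2)$ for $n\ge 2$. Dividing \eqref{recur-in} by $I(n-1)$ gives
\[
Q(n)=1+\frac{n-1}{Q(n-1)}+\frac{n-1}{I(n-1)} .
\]
Everything will come from this identity. The key observation is that $x_n=\tfrac12+\sqrt{n-\tfrac34}$ is the positive root of $x(x-1)=n-1$. Hence the map $x\mapsto 1+(n-1)/x$ sends $\sqrt{n-\frac74}+\frac12=x_{n-1}$ to $1+(n-1)\big(\sqrt{n-\frac74}-\frac12\big)/(n-2)$, and sends $\sqrt{n-\tfrac34}+\tfrac12$ to exactly $\sqrt{n-\tfrac34}+\tfrac12$.

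I will prove (2) and (3) by a joint induction on $n$, after checking small cases from the table $I(2),\dots,I(7)=1,3,9,25,75,231$.

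\emph{Proof of (3) from (2).} Since $\frac{n-1}{I(n-1)}>0$, we have $Q(n)>1+\frac{n-1}{Q(n-1)}$. Applying (2) at $n-1$ gives $Q(n-1)<\sqrt{n-\frac34}+\frac12$. Rationalising, $\frac{n-1}{\sqrt{n-3/4}+1/2}=\sqrt{n-\frac34}-\frac12$. So $Q(n)>\sqrt{n-\frac34}+\frac12$, which is (3).

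\emph{Proof of (2) from (3).} Use (3) at $n-1$, namely $Q(n-1)>\sqrt{n-\frac74}+\frac12$. This gives
\[
Q(n)<1+\frac{(n-1)\big(\sqrt{n-\frac74}-\frac12\big)}{n-2}+\frac{n-1}{I(n-1)} .
\]
It then suffices to show this is below $\sqrt{n+\frac14}+\frac12$. The deterministic part has a margin of order $1/\sqrt n$, because the target uses the shift $\frac14$ rather than $-\frac34$. The error term $\frac{n-1}{I(n-1)}$ decays superexponentially by Lemma~\ref{basic fact}(3), or more elementarily since $I(n-1)\ge 3^{n-3}$.

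This comparison is where I expect the main (purely computational) obstacle. The plan is to square and compare, verifying the inequality for $n\ge 7$. Any finitely many remaining cases will be checked numerically.

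\emph{Item (4).} This is immediate from (2) and (3) by squeezing.

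\emph{Item (5).} For $n\ge 7$, item (3) at $n+1$ gives $Q(n+1)>\sqrt{n+\frac14}+\frac12$. By item (2) at $n$, this lower bound exceeds $Q(n)$. So the lower bound at $n+1$ coincides with the upper bound at $n$, and $Q$ is strictly increasing from $7$ on. The steps $Q(5)=25/9<Q(6)=3<Q(7)=3.08$ are checked directly.

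\emph{Item (6).} The claim is equivalent to $Q(i+1)<Q(n-i)$. Here $i+1<n-i$ because $i\le\lfloor\frac{n-1}{2}\rfloor$.
\begin{itemize}
\item For $i\ge 4$, this is item (5), since $5\le i+1<n-i$.
\item For $i=2,3$, we have $Q(3)=Q(4)=3$; note that $Q$ is not monotone there, as $Q(5)<3$. We need $Q(n-i)>3$. Since $n\ge 10$, we have $n-i\ge 7$, and item (3) gives $Q(n-i)>\sqrt{6.25}+\frac12=3$.
\end{itemize}

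\emph{Item (1).} Write $J(n)=\prod_{k=2}^n \frac{J(k)}{J(k-1)}$, using $J(1)=1$. The recurrence for $J$ gives $\frac{J(k)}{J(k-1)}=1+\frac{k-1}{J(k-1)/J(k-2)}$. I will prove by the same fixed-point induction (now with no error term) that $\frac{J(k)}{J(k-1)}\ge \sqrt{k-\frac34}+\frac12$, with equality for $k=2$ because $J(2)/J(1)=2$. Then
\[
\Big(\frac{J(k)}{J(k-1)}\Big)^2\ge k-\tfrac12+\sqrt{k-\tfrac34}\ \ge k .
\]
The $k=2$ factor equals $4=2\cdot 2$, and multiplying over $k$ yields $J(n)^2\ge 2\,n!$.
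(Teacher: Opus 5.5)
Your treatment of (2)--(6) follows the paper's route. That route is the identity $Q(n)=1+\frac{n-1}{Q(n-1)}+\frac{n-1}{I(n-1)}$, then the joint induction: (3) at $n$ from (2) at $n-1$ via the fixed point $x_n$, and (2) at $n$ from (3) at $n-1$. Squeezing, chaining and monotonicity of $Q$ then give (4)--(6). However, one step is false. In (6) you assert ``$i+1<n-i$ because $i\le\lfloor\frac{n-1}{2}\rfloor$''. This fails when $n$ is odd and $i=\frac{n-1}{2}$: then $i+1=n-i$ and $n-i-1=i$, so both sides of (6) are the same number $I(i)I(i+1)$. The strict inequality is therefore false, e.g.\ for $n=11$, $i=5$. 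The paper's proof of (6) makes exactly the same slip (it claims $Q(i+1)<Q(n-i)$ over the whole range), so this is a defect of the statement itself. Statement (6) holds for $2\le i\le\lfloor\frac n2\rfloor-1$, and at $i=\frac{n-1}{2}$ only with equality. That weaker form is all the chain~(\ref{333}) in the proof of Theorem~\ref{14} needs. Your proof should flag this rather than reproduce it.

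In (2) you leave the only real computation undone, and your description of it is inaccurate. With $s=\sqrt{n-\frac74}$ one has exactly $1+\frac{n-1}{s+1/2}=s+\frac12+\frac{1}{s+1/2}$. The inequality $s+\frac{1}{s+1/2}<\sqrt{s^2+2}=\sqrt{n+\frac14}$ is equivalent to $\sqrt{s^2+2}<s+1$, i.e.\ $s>\frac12$. But the margin is about $\frac{1}{2n}$, not of order $\frac{1}{\sqrt n}$. The map $x\mapsto 1+\frac{n-1}{x}$ has slope close to $-1$ at its fixed point $x_n$. So the input deficit $x_n-x_{n-1}\approx\frac{1}{2\sqrt n}$ becomes an excess of the same size, which uses up the gap $x_{n+1}-x_n$ to first order.

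This margin still suffices, but only with an explicit bound on $\frac{n-1}{I(n-1)}$. Lemma~\ref{basic fact}(3) is merely a limit, and your bound $I(n-1)\ge 3^{n-3}$ is false for $n=6,7,8$ (e.g.\ $I(7)=231<243$). The paper uses (1) to get $I(n-1)>(n-1)^3$ for $n>21$ and checks smaller $n$ by computer. With your exact simplification the step already works from $n=8$ on (margin $\approx 0.039$ against $7/231\approx 0.030$). At $n=7$ it fails, since $6/75=0.08$ exceeds the margin $\approx 0.043$, so $n=7$ must remain a base case, as it is in your setup.

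Your proof of (1) takes a genuinely different and longer route than the paper's two-line induction $1+I(n)\ge\sqrt{2(n-1)!}+(n-1)\sqrt{2(n-2)!}=\sqrt{2(n-1)!}\,(1+\sqrt{n-1})>\sqrt{2\,n!}$. It is valid and yields the sharper ratio bound $J(k)/J(k-1)\ge\sqrt{k}$ as a by-product. However, the lower bound on $J(k)/J(k-1)$ needs the companion upper bound $\sqrt{k+\frac14}+\frac12$ in a joint induction. It is that upper bound which is attained at $k=2$, not your lower one, since $\sqrt{5/4}+\frac12\ne 2$.
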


\begin{proof}
We first prove statement~(1). Direct computation shows that it holds for $n=2,3,4$. For $n\geq 5$, assume that statement~(1) holds for $n-1$ and $n-2$. Then
\begin{align*}
1+I(n)&=1+I(n-1)+(n-1)(I(n-2)+1)\\
&\geq \sqrt{2}\sqrt{(n-1)!}+(n-1)\sqrt{2}\sqrt{(n-2)!}\\
&=\sqrt{2}\sqrt{(n-1)!}(1+\sqrt{n-1})\\
&>\sqrt{2}\sqrt{(n-1)!}\sqrt{n}=\sqrt{2n!}.
\end{align*}
This proves statement~(1).

We now prove statements~(2) and~(3) simultaneously by induction on $n$. Direct computation using Magma \cite{MAGMA} shows that statement~(2) holds for $7\leq n\leq 21$ and statement~(3) holds for $3\leq n\leq 21$. For $n>21$, assume that both statements hold for $n-1$ and $n-2$. 
Note that when $n>21$, statement~(1) implies\begin{equation}
\begin{aligned}
I(n-1)
&> \sqrt{2(n-1)!}-1 \\
&> \sqrt{(n-1)(2n-4)(3n-9)\cdots(8n-64)}-1 \\
&> (n-1)^4-1>(n-1)^3.
\end{aligned}
\end{equation}
% and hence $I(n-1)>(n-1)^3$. 
By Equation~(\ref{recur-in}), we have
\begin{align}\label{mm}
\frac{I(n)}{I(n-1)}&=1+\frac{n-1}{\frac{I(n-1)}{I(n-2)}}+\frac{n-1}{I(n-1)}<1+\frac{n-1}{\frac{I(n-1)}{I(n-2)}}+\frac{1}{(n-1)^2}\notag\\
&<1+\frac{n-1}{\sqrt{ n-\frac{7}{4}} +\frac{1}{2}}+\frac{1}{(n-1)^2}.
\end{align}
We can verify that for $n\geq 20$,
\begin{equation}\label{middle}
\frac{1}{2}+\frac{n-1}{\sqrt{ n-\frac{7}{4}} +\frac{1}{2}}+\frac{1}{(n-1)^2}<\sqrt{n+\frac{1}{4}}.
\end{equation}
Combining (\ref{mm}) and (\ref{middle}), we obtain
$\frac{I(n)}{I(n-1)}<\frac{1}{2}+\sqrt{n+\frac{1}{4}}.$

On the other hand,
\begin{align*}
\frac{I(n)}{I(n-1)}&=1+\frac{n-1}{\frac{I(n-1)}{I(n-2)}}+\frac{n-1}{I(n-1)}\\
&>1+\frac{n-1}{\frac{I(n-1)}{I(n-2)}}>1+\frac{n-1}{\sqrt{n-\frac{3}{4}}+\frac{1}{2}}\\
&=\sqrt{n-\frac{3}{4}}+\frac{1}{2}.
\end{align*}
This proves statements~(2) and~(3).

Statement~(4) is an immediate corollary of statements~(2) and~(3).
We now prove statement~(5). Note that $Q(7)>Q(6)>Q(5)$. For $n\geq 7$, statements~(2) and~(3) imply that
$$Q(n+1)>\sqrt{n+\frac{1}{4}}+\frac{1}{2}>Q(n).$$
This completes the proof of statement~(5).

Finally, we prove statement~(6). A direct calculation shows that $Q(3)=Q(4)=Q(6)=3$. By statement~(5), for $n\geq 10$ and $2\leq i \leq \lfloor \frac{n-1}{2}\rfloor$ we have $Q(i+1)<Q(n-i)$. Hence
$\frac{I(i+1)}{I(i)} < \frac{I(n-i)}{I(n-i-1)}$, which implies that $I(i+1)I(n-i-1) < I(i)I(n-i)$. This completes the proof of statement~(6).
\end{proof}

\subsection{Generating functions and coefficient estimates} \ 

In this subsection, we recall some preliminaries on combinatorial generating functions (see \cite{Analytic-Com2009} for more details). The ordinary generating function (OGF) of a sequence $\{a_n\}_{n\in\NN}$ is the formal power series $A(z)=\sum_{n=0}^{\infty} a_n z^n$. We write $[z^n]A(z)$ for the coefficient of $z^n$ in $A(z)$. The OGF method will be used frequently in this paper. The following Lemma~\ref{19} is a standard exercise in analytic combinatorics and will be used to estimate coefficients of power series.

\begin{lemma}\label{19}
Let $f(z)$ be a holomorphic complex function in the disk $|z|<R$, where $R>0$. For any $0<r<R$, set $M(f,r)=\sup_{|z|=r}|f(z)|$. Then
\[\big|[z^n]f(z)\big|\le \frac{M(f,r)}{r^n}.\]
Moreover, if $[z^n]f(z)\ge 0$ for all $n\in\NN$, then
\[ [z^n]f(z)\le \frac{f(r)}{r^n}.\]
\end{lemma}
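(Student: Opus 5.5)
The statement to prove is Lemma~\ref{19}, which is Cauchy's coefficient estimate. It follows from Cauchy's integral formula for Taylor coefficients.

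\textbf{Step 1 (integral formula).} Since $f$ is holomorphic on $|z|<R$, it has a Taylor expansion $f(z)=\sum_{k\ge 0} a_k z^k$ converging absolutely and uniformly on the closed circle $|z|=r$ for every $0<r<R$. Multiplying by $z^{-n-1}$ and integrating term by term over the positively oriented circle $|z|=r$, orthogonality of the monomials gives
\[
[z^n]f(z)=a_n=\frac{1}{2\pi i}\oint_{|z|=r}\frac{f(z)}{z^{n+1}}\,dz .
\]
Term-by-term integration is justified by uniform convergence.

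\textbf{Step 2 (first bound).} Parametrize $z=re^{i\theta}$ and bound the integrand in absolute value by $M(f,r)/r^{n+1}$. The contour has length $2\pi r$, so
\[
|a_n|\le \frac{1}{2\pi}\cdot 2\pi r\cdot \frac{M(f,r)}{r^{n+1}}=\frac{M(f,r)}{r^n}.
\]
This is the first inequality.

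\textbf{Step 3 (nonnegative coefficients).} Now assume $a_k\ge 0$ for all $k$. For $|z|=r$ the triangle inequality gives
\[
|f(z)|\le \sum_k a_k|z|^k=\sum_k a_k r^k=f(r),
\]
so $M(f,r)=f(r)$, with the supremum attained at $z=r$. Since $a_n\ge 0$, the first bound yields $a_n\le f(r)/r^n$.

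A shortcut avoids complex analysis entirely for the second claim: when all coefficients are nonnegative, $a_n r^n\le \sum_k a_k r^k=f(r)$ directly.

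No step is a real obstacle. The only point requiring a line of justification is the interchange of summation and integration in Step~1, which uniform convergence on compact subdisks of $|z|<R$ covers.
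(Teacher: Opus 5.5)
Your proof is correct and follows the paper's argument: Cauchy's integral formula gives the first bound, and under nonnegative coefficients the triangle inequality gives $M(f,r)\le f(r)$, which yields the second. Your closing shortcut, $a_n r^n\le \sum_k a_k r^k=f(r)$ (valid since the series converges at $r<R$), is also sound and is the most elementary route to the second claim.
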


\begin{proof}
By Cauchy's integral formula,
\[
|[z^n]f(z)|=\left|\frac{f^{(n)}(0)}{n!}\right|=\left|\frac{1}{2\pi i}\int_{|z|=r}\frac{f(z)}{z^{n+1}}\,dz\right|\le \frac{1}{2\pi}\cdot\frac{2\pi r\,M(f,r)}{r^{n+1}}=\frac{M(f,r)}{r^n}.
\]
If $[z^n]f(z)\ge 0$ for all $n$, then for any $z$ with $|z|=r$ we have $$M(f,r)=\sup_{|z|=r}|f(z)|\le \sum_{n\ge 0}[z^n]f(z)\,r^n=f(r),$$ and hence $[z^n]f(z)\le f(r)/r^n$.
\end{proof}

    Applying Lemma~\ref{19}, we obtain upper bounds for the coefficient of $x^n$ in the following two generating functions, as shown in the next lemma.

\begin{lemma}\label{216}
For any positive integer $n$, we have 
\[
[x^n]e^{x+0.5\cdot x^2}\le \frac{3^n}{n^{0.5n}}\  \text{ and }\  [x^n]e^{x+0.25\cdot x^2}\le \frac{2^n}{n^{0.5n}}.
\]
\end{lemma}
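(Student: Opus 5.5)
The plan is to apply the second part of Lemma~\ref{19} directly. Both functions $f(x)=e^{x+0.5x^2}$ and $g(x)=e^{x+0.25x^2}$ are entire and have nonnegative Taylor coefficients. Hence for every $r>0$,
\[
[x^n]f(x)\le \frac{e^{r+0.5r^2}}{r^n},\qquad [x^n]g(x)\le \frac{e^{r+0.25r^2}}{r^n}.
\]
The natural saddle-point radius is $r\approx\sqrt n$, and I will simply take $r=\sqrt{n}$. This gives
\[
[x^n]e^{x+0.5x^2}\le \frac{e^{\sqrt n+0.5n}}{n^{0.5n}},\qquad [x^n]e^{x+0.25x^2}\le \frac{e^{\sqrt n+0.25n}}{n^{0.5n}}.
\]

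It therefore remains to show two elementary inequalities for every positive integer $n$:
\[
e^{\sqrt n+0.5n}\le 3^n \quad\text{and}\quad e^{\sqrt n+0.25n}\le 2^n.
\]
Taking logarithms, these become $\sqrt n\le n(\ln 3-0.5)$ and $\sqrt n\le n(\ln 2-0.25)$. Since $\sqrt n\le n$ for $n\ge1$, it would suffice to have $\ln3-0.5\ge 1$ and $\ln2-0.25\ge1$. That is false: $\ln 3-0.5\approx0.599$ and $\ln2-0.25\approx 0.443$. So I instead require $\sqrt n\ge 1/0.599\approx1.67$ and $\sqrt n\ge 1/0.443\approx 2.26$. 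The first holds for $n\ge 3$ and the second for $n\ge 6$.

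The small values of $n$ need separate treatment, and this is the only real obstacle. For these I would either choose a different radius $r$ or compute the coefficients directly. The coefficients are $[x^n]e^{x+0.5x^2}=(I(n)+1)/n!$ by Lemma~\ref{basic fact}(2), and $[x^n]e^{x+0.25x^2}=\sum_k \frac{1}{4^k k!(n-2k)!}$. Checking $n=1,2$ for the first bound:
\begin{itemize}
\item $n=1$: the coefficient is $1\le 3$.
\item $n=2$: the coefficient is $1/2+1/2=1\le 9/2$.
\end{itemize}
Checking $n=1,\dots,5$ for the second bound, the coefficients are $1,\ 3/4,\ 1/6+1/4,\ldots$, each tiny compared with $2^n/n^{n/2}$. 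For instance, at $n=5$ the right side is $32/5^{2.5}\approx0.57$, while the coefficient is $\frac1{120}+\frac1{24}+\frac{1}{32\cdot 2}\cdot\ldots$, well below $0.1$.

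Alternatively, a slightly smaller radius such as $r=\sqrt n/2$ can be tried for the second bound to reduce the casework, but direct verification for $n\le5$ is the simplest route.
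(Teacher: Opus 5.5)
Your proof is correct and follows essentially the paper's route: a Cauchy bound from Lemma~\ref{19} at a radius of order $\sqrt n$, followed by a direct check of finitely many small $n$. The paper instead takes $r=\sqrt{n/(2b)}$ (so $r=\sqrt{2n}$ for the second series) and uses $\sqrt{n/(2b)}\le n/(4\sqrt b)$, which needs $n\ge 8$. One cosmetic slip in your $n=5$ check: the last term is $\frac{(1/4)^2}{2!\,1!}=\frac{1}{32}$, not $\frac{1}{32\cdot 2}$, so the coefficient is $\frac{13}{160}\approx 0.081$, still far below $32/5^{2.5}\approx 0.57$.
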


\begin{proof} 
For $n\ge 8$, applying Lemma~\ref{19} with $f(z)=e^{z+bz^2}$ and $r=\sqrt{\frac{n}{2b}}$, we obtain
\begin{align*}
[x^n]e^{x+bx^2}
&\le \frac{e^{\sqrt{\frac{n}{2b}}+0.5n}}{\left(\frac{n}{2b}\right)^{0.5n}}.
\end{align*}
Since $n\ge 8$, we have $\sqrt{\frac{n}{2b}}\le \frac{n}{4\sqrt{b}}$, and hence
\begin{align*}
[x^n]e^{x+bx^2}
&\le \frac{e^{\left(\frac{1}{4\sqrt{b}}+0.5\right)n}}{\left(\frac{n}{2b}\right)^{0.5n}}
=\frac{e^{\left(\frac{1}{4\sqrt{b}}+0.5\right)n}(2b)^{0.5n}}{n^{0.5n}}
=\frac{\left(e^{\left(\frac{1}{4\sqrt{b}}+0.5\right)}\sqrt{2b}\right)^n}{n^{0.5n}}.
\end{align*}
Let $g(b)=e^{\left(\frac{1}{4\sqrt{b}}+0.5\right)}\sqrt{2b}$. A direct calculation shows that $g\!\left(\frac{1}{2}\right)<3$ and $g\!\left(\frac{1}{4}\right)<2$. Therefore, the desired inequalities hold for $n\ge 8$. One also checks directly that they hold for $1\le n\le 7$, completing the proof.
\end{proof}

We end this section by presenting a relation between the coefficients of two special generating functions.

\begin{lemma}\label{sumtransitive}
Let $A(x)=1+\sum_{i=1}^{\infty} a_n x^n $ and $B(x)=1+\sum_{i=1}^{\infty} b_n x^n $ be two formal power series with real coefficients. Suppose that 
% for
% Let $\{a_n\}_{n\in\NN^{+}}$ be an infinite sequence of real numbers. For each positive integer $n$, define
\[
    b_n=\sum_{\sum_{i=1}^{\infty} i k_i=n}\frac{a_1^{k_1}a_2^{k_2}\cdots a_m^{k_m}\cdots}{k_1!k_2!\cdots k_m!\cdots}.
\]
holds for $n\geq 1$. 
Then, for $n\ge 2$, we have
\[
    b_n=a_n+\frac{1}{n}\sum_{k=1}^{n-1}ka_k b_{n-k}.
\]
\end{lemma}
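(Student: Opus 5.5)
The relation will be obtained from the exponential generating-function identity hidden in the definition of $b_n$, followed by logarithmic differentiation. First, observe that the defining formula for $b_n$ is exactly the coefficient extraction
\[
B(x)=\exp\Bigl(\sum_{i\ge 1} a_i x^i\Bigr)=\prod_{i\ge1}\exp(a_i x^i)=\prod_{i\ge 1}\sum_{k_i\ge 0}\frac{a_i^{k_i}x^{ik_i}}{k_i!},
\]
understood as formal power series. Expanding the product and collecting the terms with $\sum_i ik_i=n$ reproduces the stated $b_n$ for $n\ge1$, and the constant term is $1$. This is legitimate formally, since for each fixed $n$ only finitely many tuples $(k_i)$ contribute.

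Set $\widetilde A(x)=\sum_{i\ge1}a_ix^i$, so that $B=\exp(\widetilde A)$. Formal differentiation gives $B'(x)=\widetilde A'(x)B(x)$, hence
\[
xB'(x)=\bigl(x\widetilde A'(x)\bigr)B(x).
\]
Comparing the coefficients of $x^n$ on both sides, the left side gives $nb_n$. The right side gives
\[
\sum_{k=1}^{n} k a_k b_{n-k},
\]
with $b_0=1$. Separating the term $k=n$, which equals $na_n$, and dividing by $n$ yields
\[
b_n=a_n+\frac1n\sum_{k=1}^{n-1}ka_kb_{n-k}.
\]
This holds for every $n\ge1$, and so in particular for $n\ge2$.

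The only point requiring care is the formal justification of $\exp$ of a series with zero constant term and the identity $(\exp \widetilde A)'=\widetilde A'\exp\widetilde A$ in $\mathbb{R}[[x]]$. This is standard: one can verify it termwise from $\exp(\widetilde A)=\sum_m \widetilde A^m/m!$, using $(\widetilde A^m)'=m\widetilde A^{m-1}\widetilde A'$. Alternatively, the product expansion can be bypassed entirely by a direct combinatorial argument: multiply the summand by $n=\sum_i ik_i$ and, for each $i$ with $k_i\ge1$, remove one factor $a_i$ and lower $k_i$ by one. I would present the generating-function route, as it is shorter and matches the OGF method of this section.
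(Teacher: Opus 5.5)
Your proposal is correct and is essentially the paper's proof: the paper likewise identifies $B(x)=\exp\bigl(\sum_{i\ge1}a_ix^i\bigr)$ through the product of exponentials, differentiates, and compares coefficients of $x^{n-1}$ in $B'=\widetilde A'B$, which is the same as your comparison of the $x^n$ coefficients in $xB'=(x\widetilde A')B$. Your remarks on the formal justification, and that the identity already holds for $n\ge1$, are harmless additions.
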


\begin{proof}
Consider the generating function
\begin{align}\label{Un}
B(n)=1+\sum_{n=1}^{\infty}b_n x^n
&=1+\sum_{n=1}^{\infty}\left(\sum_{\sum_{i=1}^{\infty} i k_i=n}\frac{a_1^{k_1}a_2^{k_2}\cdots a_m^{k_m}\cdots}{k_1!k_2!\cdots k_m!\cdots}\right)x^n\notag\\
&=\sum_{k_1,k_2,\ldots\in\NN}\frac{(a_1x)^{k_1}(a_2x^2)^{k_2}\cdots (a_mx^m)^{k_m}\cdots}{k_1!k_2!\cdots k_m!\cdots}\notag\\
&=\prod_{m\ge 1}\left(\sum_{k_m=0}^{\infty}\frac{(a_mx^m)^{k_m}}{k_m!}\right)
=e^{a_1x+a_2x^2+\cdots+a_mx^m+\cdots}.
\end{align}
Comparing coefficients of $x^n$ gives
\[
    b_n=[x^n]e^{a_1x+a_2x^2+\cdots+a_mx^m+\cdots}.
\]
Differentiating both sides of Equation~(\ref{Un}) yields
\[
\sum_{n=1}^{\infty}n b_n x^{n-1}=\left(1+\sum_{n=1}^{\infty}b_n x^n\right)\left(a_1+2a_2x+\cdots+ma_mx^{m-1}+\cdots\right).
\]
Comparing coefficients of $x^{n-1}$, we obtain
\[
    nb_n=na_n+\sum_{k=1}^{n-1}ka_k b_{n-k}.
\]
Dividing by $n$ completes the proof.
\end{proof}

\section{The \texorpdfstring{$(2,*)$}{(2,*)}-generating probability for \texorpdfstring{$\mathcal{S}_n$}{Sn}} \label{sec:gen-probability}

We aim to prove Theorem~\ref{thm-byproduct} in this section. Equivalently, we need to compute the proportion of $(2,*)$-generating pairs among all $(2,*)$-pairs in $\calS_n$. As mentioned in the introduction, we follow the strategy of Dixon in \cite{dixon-1969}. More precisely, we first compute the proportion of $(2,*)$-pairs that generate an intransitive subgroup of $\calS_n$ in subsection~\ref{subsec:intran}. Then we compute the proportion of $(2,*)$-pairs that generate an imprimitive subgroup in subsection~\ref{subsec:imprim}. Finally, in subsection~\ref{subsec:byproduct}, using several classical results on primitive subgroups of $\calS_n$, we show that the probability that a random $(2,*)$-pair generates either $\calS_n$ or $\calA_n$ tends to $1$ as $n\rightarrow \infty$. This result immediately implies Theorem~\ref{thm-byproduct}.

\subsection{Proportion for intransitive pairs}\label{subsec:intran} \

Recall that $\calI_n$ denotes the set of involutions in $\calS_n$ and $I(n)$ denotes the cardinality of $\calI_n$. 
For $n\geq 3$, let $\PP_{\text{\rm int}}(n)$ be the proportion of $(2,*)$-pairs in $\calI_n\times \calS_n$ that generate an intransitive subgroup. That is, 
\[\PP_{\text{\rm int}}(n)=\frac{\Big|\{(x,y)\in \calI_n\times \calS_n \mid \l x,y\r \leq \calS_n \text{ is intransitive} \} \Big |}{|\calI_n\times \calS_n|}.  \]
The following lemma gives a recursive formula for $\PP_{\text{\rm int}}$; here $\PP_{\text{\rm int}}(1) $ and $ \PP_{\text{\rm int}}(2)$ are defined to be $0$ to make the formula concise.

\begin{lemma}\label{recursive-formula-rn}
The following equation holds. 
    \begin{align*}
    \PP_{\text{\rm int}}(n) &=\frac{ \displaystyle \sum_{i=1}^{n-1} (i+1)I(i)+\frac{n}{2}\displaystyle\sum_{i=2}^{n-2}I(i)I(n-i)-\displaystyle\sum_{i=1}^{n-1}iI(i)(1+I(n-i))\PP_{\text{\rm int}}(i)}{nI(n)}
\end{align*}
\end{lemma}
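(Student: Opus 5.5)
The plan is to adapt Dixon's orbit-decomposition argument to pairs whose first coordinate is an involution \emph{or the identity}. Restricting an involution to an orbit of $\langle x,y\rangle$ may give the identity, so the natural ambient set on $m$ points is $(\calI_m\cup\{1\})\times\calS_m$. This set has $(1+I(m))\,m!$ elements; for $m=0$ we use the convention $1+I(0)=1$. Let $t(m)$ be the number of pairs $(x,y)$ in this set for which $\langle x,y\rangle$ is transitive on $\{1,\dots,m\}$.

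\textbf{Step 1: express $t(m)$ through $\PP_{\text{\rm int}}(m)$.} Split according to whether $x$ is the identity. When $x=1$, transitivity means $y$ is an $m$-cycle, which gives $(m-1)!$ pairs. When $x\in\calI_m$, by definition the transitive pairs number $I(m)\,m!\,(1-\PP_{\text{\rm int}}(m))$. Hence
\[ t(m)=I(m)\,m!\,(1-\PP_{\text{\rm int}}(m))+(m-1)!. \]
For $m=1,2$ this holds with the convention $\PP_{\text{\rm int}}(1)=\PP_{\text{\rm int}}(2)=0$. Indeed, $t(1)=1$, and $t(2)=3$ because every pair other than $(1,1)$ is transitive on two points; both agree with the formula. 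This check is the reason the convention is needed.

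\textbf{Step 2: decompose by the orbit of the point $1$.} Fix the orbit $\Omega\ni 1$ of $\langle x,y\rangle$, with $|\Omega|=k$. There are $\binom{n-1}{k-1}$ choices of $\Omega$. The pair restricts to a transitive pair on $\Omega$, counted by $t(k)$, and to an arbitrary pair in $(\calI_{n-k}\cup\{1\})\times\calS_{n-k}$ on the complement. This correspondence is a bijection, so
\[ (1+I(n))\,n!=\sum_{k=1}^{n}\binom{n-1}{k-1}\,t(k)\,(1+I(n-k))\,(n-k)!. \]
Substitute Step 1 and use $\binom{n-1}{k-1}(n-k)!\,k!=k\,(n-1)!$ and $\binom{n-1}{k-1}(n-k)!\,(k-1)!=(n-1)!$. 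After dividing by $(n-1)!$ this becomes
\[ n(1+I(n))=\sum_{k=1}^{n}(1+I(n-k))\bigl(kI(k)(1-\PP_{\text{\rm int}}(k))+1\bigr). \]

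\textbf{Step 3: isolate $\PP_{\text{\rm int}}(n)$.} The $k=n$ term of the sum is $nI(n)(1-\PP_{\text{\rm int}}(n))+1$. Solving for $nI(n)\PP_{\text{\rm int}}(n)$ gives
\[ nI(n)\PP_{\text{\rm int}}(n)=\sum_{k=1}^{n-1}(1+I(n-k))(1+kI(k))-(n-1)-\sum_{k=1}^{n-1}kI(k)(1+I(n-k))\PP_{\text{\rm int}}(k). \]
Now expand the first sum term by term.
\begin{itemize}
\item The constant terms contribute $n-1$, which cancels the $-(n-1)$.
\item Reindexing $\sum_{k=1}^{n-1} I(n-k)=\sum_{i=1}^{n-1}I(i)$ and adding the $kI(k)$ terms gives $\sum_{i=1}^{n-1}(i+1)I(i)$.
\item For the cross terms, symmetrise $\sum_k kI(k)I(n-k)$ under $k\mapsto n-k$ to get $\frac n2\sum_k I(k)I(n-k)$. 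Since $I(1)=0$, the range of summation reduces to $2\le k\le n-2$.
\end{itemize}
Dividing by $nI(n)$ yields exactly the stated formula.

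I expect the main obstacle to be the bookkeeping in Steps 1 and 2: enlarging the ambient set by the identity, and checking the small cases $k=1,2$ (including $k=n$ with $I(0)=0$). The final algebra is routine.
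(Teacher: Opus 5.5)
Your proof is correct. After Step 2 it lands on exactly the identity the paper uses, namely $n(1+I(n))=\sum_{k=1}^{n}(1+I(n-k))\bigl(kI(k)(1-\PP_{\text{\rm int}}(k))+1\bigr)$, and your Step 3 is the same algebra as the paper's.

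The difference lies in how you obtain that identity. The paper classifies pairs in $(\calI_n\cup\{1\})\times\calS_n$ by the \emph{whole} orbit partition and counts the set partitions of each type. This gives the exponential-formula identity $1+\sum_{n\ge1}(1+I(n))x^n=\exp\bigl(\sum_{i\ge1}a_ix^i\bigr)$ with $a_i=\bigl((1-\PP_{\text{\rm int}}(i))iI(i)+1\bigr)/i$. The paper then needs Lemma~\ref{sumtransitive}, which differentiates this formal identity, to extract the recursion.

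You instead root the decomposition at the point $1$ and split off only its orbit. Via $\binom{n-1}{k-1}(n-k)!\,k!=k(n-1)!$, this yields the differentiated form directly; it is the combinatorial counterpart of the derivative step. So you need neither the count of partitions of a given type nor Lemma~\ref{sumtransitive}.

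The paper's route follows Dixon's framework and records the structural exponential identity as a by-product. Yours is shorter and self-contained. Your explicit check that the conventions $\PP_{\text{\rm int}}(1)=\PP_{\text{\rm int}}(2)=0$ and $1+I(0)=1$ give $t(1)=1$ and $t(2)=3$ is exactly what makes the small terms valid; the paper relies on these conventions without stating the check.
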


\begin{proof}
We denote the set $\{1,2,\dots, n\}$ by $[n]$.
Let $\mathcal{P}=\{\Omega_1, \Omega_2,\dots \}$ be a partition of $[n]$, and suppose that, for each $1\leq i \leq n$, there are $k_i$ parts in $\mathcal{P}$ of size $i$. 
We first enumerate the number of $(2,*)$-pairs of $\calS_n$ that generate a group whose orbits form the partition $\mathcal{P}$. 
Let $(x,y)$ be a $(2,*)$-pair of $\calS_n$ which generates an intransitive subgroup with set of  orbits $\mathcal{P}$.
For any orbit $\Omega\in \mathcal{P}$ of  $\l x,y\r$ of length $i$, the restriction pair   $(x|_{\Omega},y|_{\Omega})$ on $\Omega$ can be viewed as a transitive pair in $(\mathcal{I}_{i}\cup \{ 1\} )\times \mathcal{S}_{i}$. Then  $x|_{\Omega}$ is either an involution or the identity and when the latter holds, $y|_{\Omega}$ must be an $i$-cycle. Hence, the number of pairs $(x|_{\Omega},y|_{\Omega})$ generating  a transitive subgroup is  
$$(1-\PP_{\text{\rm int}}(i)) I(i) i!+(i-1)!.$$
This yields that the number of pairs $(x,y)\in (\mathcal{I}_n\cup\left\{1\right\})\times \mathcal{S}_n$ such that the orbits of $\l x,y\r$ form the partition $\mathcal{P}$ is 
$$\prod_{i=1}^{n}\Big((1-\PP_{\text{\rm int}}(i)) I(i) i!+(i-1)!\Big)^{k_i}=\prod_{i=1}^{n}(i-1)!^{k_{i}}\prod_{i=1}^{n}\Big((1-\PP_{\text{\rm int}}(i))iI(i)+1\Big)^{k_i}.$$

Now, note that the above enumeration depends only on the parameters $k_1,k_2, \dots , k_n$ and the number of partitions of $[n]$ with the same parameters is  $n!/\Big(\prod_{i=1}^n k_i!\prod_{i=1}^n i!^{k_i}\Big ).$
This gives
\begin{align*}
  (I(n)+1)n!= & |(\calI_n\cup \{1\})|\cdot |\calS_n| \\
   =&\sum_{\sum_{i=1}^{n} ik_i=n}\left(\prod_{i=1}^{n}(i-1)!^{k_{i}}\prod_{i=1}^{n}((1-\PP_{\text{\rm int}}(i))iI(i)+1)^{k_i}\frac{n!}{\prod_{i=1}^n k_i!\prod_{i=1}^n i!^{k_i}} \right)
   \\=&\sum_{\sum_{i=1}^{n} ik_i=n} \Bigg(\prod_{i=1}^{n}\Big ((1-\PP_{\text{\rm int}}(i))iI(i)+1\Big)^{k_i}\frac{n!}{\prod_{i=1}^n k_i!\prod_{i=1}^n i^{k_i}} \Bigg)
   \\=&n!\sum_{\sum_{i=1}^{n} ik_i=n} \Bigg(\prod_{i=1}^{n} \frac{\Big (\frac{(1-\PP_{\text{\rm int}}(i))iI(i)+1}{i}\Big)^{k_i}}{ k_i! } \Bigg).
\end{align*}
Set $a_i=\frac{(1-\PP_{\text{\rm int}}(i))iI(i)+1}{i}$ and  $b_i=I(i)+1$ for $i \in \mathbb{N}^+$. 
By Lemma~\ref{sumtransitive}, we obtain the following recursive formula for $I(n)$:
\begin{align*}
n(I(n)+1)&=na_n+\sum_{k=1}^{n-1}(I(n-k)+1)ka_k\\&=(1-\PP_{\text{\rm int}}(n))nI(n)+1+\sum_{k=1}^{n-1}(I(n-k)+1)((1-\PP_{\text{\rm int}}(k))kI(k)+1)\\
&=(1-\PP_{\text{\rm int}}(n))nI(n)+1+n-1+\sum_{k=1}^{n-1}I(n-k)+\sum_{k=1}^{n-1}(1-\PP_{\text{\rm int}}(k))kI(k) \notag \\ & \quad +\sum_{k=1}^{n-1}kI(k)I(n-k)-\sum_{k=1}^{n-1}kI(k)I(n-k)\PP_{\text{\rm int}}(k)\\
&=\sum_{k=1}^{n}kI(k)-\sum_{k=1}^{n}kI(k)\PP_{\text{\rm int}}(k)+n+\sum_{k=1}^{n-1}I(k)+\frac{n}{2}\sum_{k=2}^{n-2}I(k)I(n-k)\notag \\
&\quad -\sum_{k=1}^{n-1}kI(k)I(n-k)\PP_{\text{\rm int}}(k)\\
&=nI(n)+n-nI(n)\PP_{\text{\rm int}}(n)+\sum_{k=1}^{n-1}(k+1)I(k)+\frac{n}{2}\sum_{k=2}^{n-2}I(k)I(n-k) \\ &\quad-
\sum_{k=1}^{n-1}kI(k)\PP_{\text{\rm int}}(k)(1+I(n-k)).
\end{align*}
Adding $nI(n)\PP_{\text{\rm int}}(n)-nI(n)-n$ to both sides of the above equation, we get
$$nI(n)\PP_{\text{\rm int}}(n)=\sum_{k=1}^{n-1}(k+1)I(k)+\frac{n}{2}\sum_{k=2}^{n-2}I(k)I(n-k)-
\sum_{k=1}^{n-1}kI(k)\PP_{\text{\rm int}}(k)(1+I(n-k)).$$
Dividing both sides by $nI(n)$ leads to the result.
\end{proof}

By this recursive formula, we give upper  lower bounds for $\mathbb{P}_{\text{\rm int}}(n)$.

\begin{theorem}\label{14}
For any positive constant $\varepsilon$,
there is an integer $N$ such that for any $n>N$, $$\frac{1-\varepsilon}{n^{1.5}}<\PP_{\text{\rm int}}(n)<\frac{2.5+\varepsilon}{n^{0.5}}. $$ 
\end{theorem}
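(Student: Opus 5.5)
The plan is to avoid working with the recursion of Lemma~\ref{recursive-formula-rn} directly, since its subtracted term is awkward to control. Instead I would bound $\PP_{\text{\rm int}}(n)$ from both sides by elementary counting. The only input needed is the ratio asymptotics of Lemma~\ref{prior}(2)--(4), namely $I(n-j)/I(n)\sim n^{-j/2}$ for fixed $j$, together with the monotonicity in Lemma~\ref{prior}(6). In fact both bounds should come out of order $n^{-1/2}$, which is stronger than what the statement requires.

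\textbf{Lower bound.} The pair $(x,y)\in\calI_n\times\calS_n$ generates an intransitive group whenever $x$ and $y$ have a common fixed point. For a fixed point $p$, the number of such pairs fixing $p$ is $I(n-1)(n-1)!$. By the Bonferroni inequality, the number of pairs with at least one common fixed point is at least
\[ nI(n-1)(n-1)!-\binom{n}{2}(I(n-2)+1)(n-2)!. \]
Dividing by $I(n)\,n!$ gives
\[ \PP_{\text{\rm int}}(n)\ \ge\ \frac{I(n-1)}{I(n)}-\frac{I(n-2)+1}{2I(n)}. \]
By Lemma~\ref{prior}(4), the first term is $(1+o(1))n^{-1/2}$ and the second is $O(n^{-1})$. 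So $\PP_{\text{\rm int}}(n)>(1-\varepsilon)n^{-1/2}>(1-\varepsilon)n^{-1.5}$ for large $n$.

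\textbf{Upper bound.} If $\langle x,y\rangle$ is intransitive, then it preserves some subset $S$ with $1\le |S|=k\le n/2$. For a fixed $S$, the number of $y$ preserving $S$ is $k!(n-k)!$. The number of involutions $x$ preserving $S$ is at most $(I(k)+1)(I(n-k)+1)$, since $x$ restricts to an involution or the identity on each side. Summing over $S$ gives
\[ \PP_{\text{\rm int}}(n)\le\sum_{k=1}^{\lfloor n/2\rfloor}\frac{(I(k)+1)(I(n-k)+1)}{I(n)}. \]
I would split this sum into three parts.
\begin{itemize}
\item The $k=1$ term is $(I(n-1)+1)/I(n)\sim n^{-1/2}$.
\item The $k=2$ term is $2(I(n-2)+1)/I(n)=O(n^{-1})$.
\item For $3\le k\le n/2$, Lemma~\ref{prior}(6) shows that $I(k)I(n-k)$ is non-increasing in $k$ on this range. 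The $+1$'s are harmless, because $I(k)\ge 1$ for $k\ge 2$, so each factor $I(j)+1$ is at most $2I(j)$. Hence the $k=3$ term is $O(I(n-3)/I(n))=O(n^{-3/2})$, and the terms with $4\le k\le n/2$ total at most $n\cdot 4I(4)I(n-4)/I(n)=O(n\cdot n^{-2})=O(n^{-1})$.
\end{itemize}
Altogether $\PP_{\text{\rm int}}(n)\le(1+o(1))n^{-1/2}<(2.5+\varepsilon)n^{-1/2}$ for large $n$.

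\textbf{Main obstacle.} The delicate step is the tail of the upper-bound sum. One must make sure that Lemma~\ref{prior}(6) is applied in its valid range ($n\ge 10$, $2\le i\le\lfloor (n-1)/2\rfloor$). The middle term $k=n/2$ for even $n$ also needs care; it is covered by a symmetric comparison, or directly by the crude bound $I(n/2)^2/I(n)$, which is super-polynomially small by Lemma~\ref{basic fact}(3). The only ratio estimates required are $I(n-j)/I(n)=O(n^{-j/2})$ for $j\le 4$, which follow by iterating Lemma~\ref{prior}(3).
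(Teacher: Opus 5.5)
Your argument is correct, and it takes a genuinely different route from the paper. The paper works entirely through the exact recursion of Lemma~\ref{recursive-formula-rn}, which is itself derived from the exponential-formula identity of Lemma~\ref{sumtransitive}.
\begin{itemize}
\item For the upper bound, the paper discards the subtracted term using $\PP_{\text{\rm int}}(i)\ge 0$. It then evaluates $\sum_{i=1}^{n-1}(i+1)I(i)=I(n+1)-\frac{n(n+1)}{2}$ via \eqref{recur-in}, and bounds every product $I(i)I(n-i)$ with $3\le i\le n-3$ by $I(3)I(n-3)$.
\item For the lower bound, the paper replaces $\PP_{\text{\rm int}}(i)$ by $1$ in the subtracted term. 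The product terms then cancel by symmetry, and only $\sum_{i=2}^{n-1}I(i)/(nI(n))>I(n-1)/(nI(n))\sim n^{-1.5}$ survives.
\end{itemize}

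You bypass the recursion completely. You use a union bound over invariant subsets of size at most $n/2$, and a Bonferroni bound on common fixed points. This is more elementary: Lemmas~\ref{sumtransitive} and~\ref{recursive-formula-rn} become unnecessary. It is also strictly sharper. Your two bounds match and give $\PP_{\text{\rm int}}(n)=(1+o(1))n^{-1/2}$, whereas the paper's two bounds differ by a factor of order $n$. The paper's constant $2.5$ arises from bounding roughly $n$ middle terms by the $i=3$ term, which contributes an extra $1.5\,n^{-1/2}$. Isolating $k=3$ and bounding the tail by the $k=4$ term, as you do, is exactly what removes this loss.

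Two small remarks.
\begin{itemize}
\item The even-$n$ term $k=n/2$ needs no separate care. Lemma~\ref{prior}(6) with $i=n/2-1$ already gives $I(n/2)^2<I(n/2-1)I(n/2+1)$.
\item You were right to phrase the monotonicity as non-increasing. For odd $n$ and $i=(n-1)/2$, the two sides of Lemma~\ref{prior}(6) coincide, so only the weak inequality actually holds at that endpoint.
\end{itemize}
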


\begin{proof}
By Lemma~\ref{prior}(6), for $n\geq 10$, the following inequality holds.
\begin{equation}\label{333}
I (2)I(n-2)>I(3)I(n-3)>\ldots> I(\lfloor\frac{n+1}{2}\rfloor) I(\lceil\frac{n-1}{2}\rceil) .  
\end{equation}
By Equation~(\ref{recur-in}), 
\begin{align*}
    I(n+1)-I(2)& =\sum_{i=2}^{n} \left( I(i+1)-I(i) \right) =\sum_{i=2}^{n} i \left( I(i-1)+1 \right)
    \\ &= \frac{(n+1)n}{2}-1+ \sum_{i=1}^{n-1}( i +1) I(i). 
\end{align*}
This gives 
\begin{equation} \label{eqn:(i+1)I(i)}
    \sum_{i=1}^{n-1}(i+1)I(i)=I(n+1)-\frac{(n+1)n}{2}.
\end{equation}
% Note that $0<\PP_{\text{\rm int}}(n)<1$ for any $n\geq 3$. 
Now, by Lemma~\ref{recursive-formula-rn},  
\begin{align*}
    \PP_{\text{\rm int}}(n) 
    & \leq \frac{\displaystyle\sum_{i=1}^{n-1} (i+1)I (i)+\frac{n}{2}\sum_{i=2}^{n-2}I(i)I(n-i)}{nI(n)} \\
    & <\frac{\displaystyle I(n+1)-\frac{(n+1)n}{2} +\frac{n}{2}\Big(2I(2)I(n-2) +(n-5)I(3)I(n-3)\Big)}{nI(n)} \\
    & <\frac{\displaystyle I(n+1) +\frac{n}{2}\Big(2I(n-2) +(3n-15)I(n-3)\Big)}{nI(n)} \\
    & =\frac{Q(n+1)}{n}+
\frac{1}{Q(n-1)Q(n)}+\frac{1.5n-7.5}{Q(n-2)Q(n-1)Q(n)}, 
\end{align*}
where $Q(n)=\frac{I(n)}{I(n-1)}$ as defined in Lemma~\ref{prior},  the first line follows from the fact that $\mathbb{P}_{\text{\rm int}}(n) \geq 0$, and the second line follows from Inequality~\ref{333} and Equation~\ref{eqn:(i+1)I(i)}.
 By Lemma~\ref{prior}(4), 
\begin{align*}
    \varlimsup_{n\to\infty} n^{0.5}\PP_{\text{\rm int}}(n) &\leq \lim _{n\to\infty} n^{0.5} \left(\frac{Q(n+1)}{n}+
\frac{1}{Q(n-1)Q(n)}+\frac{1.5n-7.5}{Q(n-2)Q(n-1)Q(n)} \right) \\&=1+1.5=2.5.
\end{align*}
This means that for any positive constant $\varepsilon$, there is an integer $N_1$ such that for any $n>N_1$, $\PP_{\text{\rm int}}(n) \leq (2.5+\varepsilon)n^{-0.5}.$

On the other hand, since $\PP_{\text{\rm int}}(n)\leq 1$, we have
\begin{align*}
    \PP_{\text{\rm int}}(n)=&\frac{\displaystyle\sum_{i=1}^{n-1}(i+1)I(i)+\frac{n}{2}\sum_{i=2}^{n-2}I(i)I(n-i)}{nI(n)}-\sum_{i=1}^{n-1}\frac{iI(i)(1+I(n-i))\PP_{\text{\rm int}}(i)}{nI(n)}\\
    \geq &\frac{\displaystyle\sum_{i=1}^{n-1}(i+1)I(i)+\frac{n}{2}\sum_{i=2}^{n-2}I(i)I(n-i)-\sum_{i=1}^{n-1}iI(i)(1+I(n-i))}{nI(n)}\\
    =&\frac{\displaystyle\sum_{i=2}^{n-1}I(i)}{nI(n)}>\frac{I(n-1)}{nI(n)}.
\end{align*}
By Lemma~\ref{prior}(4), $$\lim\limits_{n\to +\infty}\frac{I(n-1)}{nI(n)}=\lim\limits_{n\to +\infty}\frac{1}{nQ(n)}=\frac{1}{n^{1.5}}.$$
It follows that for any positive constant $\varepsilon$, there is an integer $N_2$ such that for any $n>N_2$, $\PP_{\text{\rm int}}(n) \geq (1-\varepsilon)n^{0.5}.$ This completes the proof.
\end{proof}

\subsection{Proportion of imprimitive pairs}\label{subsec:imprim} \ 

In this subsection, we aim to show that the proportion of transitive but imprimitive pairs is rare.
Let $\PP_{\text{\rm imp}}(n)$ denote the proportion of pairs
$(x,y)\in \mathcal{I}_n\times \mathcal{S}_n$ such that $x$ and $y$ generate an imprimitive group.
That is 
\[\PP_{\text{\rm imp}}(n)=\frac{\Big|\{(x,y)\in \calI_n\times \calS_n| \l x,y\r  \text{ is transitive and imprimitive} \} \Big|}{|\calI_n\times \calS_n|}.  \]

\begin{lemma}\label{defimp}
The following inequality holds:
$$I(n)\PP_{\text{\rm imp}}(n)\leq 2\cdot n^{0.5}\sum_{1< m < n,\ m\mid n}\left(m^{0.5m}\cdot (1+I(\frac{n}{m}))^{m}  0.8^m\right).$$
\end{lemma}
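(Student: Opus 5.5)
We bound the number of pairs $(x,y)\in\calI_n\times\calS_n$ generating a transitive imprimitive group by a union bound over block systems, in the spirit of Dixon. If $\l x,y\r$ is transitive and imprimitive, it preserves a partition of $[n]$ into $m$ blocks of size $k=n/m$ for some divisor $1<m<n$. The number of such partitions is
\[
\frac{n!}{m!\,(k!)^m}.
\]
The set-wise stabiliser of a fixed partition is the wreath product $W=\calS_k\wr\calS_m$, of order $(k!)^m m!$. So the count we need is at most
\[
\sum_{m}\frac{n!}{m!(k!)^m}\,|W\cap \calI_n|\cdot |W| \;=\; n!\sum_{m}|W\cap\calI_n|.
\]
Dividing by $|\calI_n\times\calS_n|=I(n)\,n!$ gives
\[
I(n)\PP_{\text{\rm imp}}(n)\le \sum_{1<m<n,\ m\mid n}\bigl(1+|W\cap\calI_n|\bigr).
\]
It therefore suffices to show
\[
1+|W\cap \calI_n|\le 2n^{0.5}\,m^{0.5m}(1+I(k))^m\,0.8^m.
\]

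Next we count elements of $W$ with $x^2=1$. Write such an $x$ as $(h_1,\dots,h_m)\pi$ with $\pi\in\calS_m$ and $\pi^2=1$. The condition $x^2=1$ splits as follows.
\begin{itemize}
\item For each block fixed by $\pi$, we need $h_i^2=1$, giving $1+I(k)$ choices.
\item For each $2$-cycle $(i\,j)$ of $\pi$, we need $h_j=h_i^{-1}$, giving $k!$ choices.
\end{itemize}
Hence, with $j$ the number of $2$-cycles of $\pi$,
\[
1+|W\cap\calI_n|=\sum_{j}\frac{m!}{j!\,2^j(m-2j)!}(1+I(k))^{m-2j}(k!)^j.
\]
By Lemma~\ref{prior}(1), $k!\le (1+I(k))^2/2$. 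Therefore
\[
1+|W\cap\calI_n|\le (1+I(k))^m\sum_j\frac{m!}{j!\,4^j(m-2j)!}=(1+I(k))^m\,m!\,[x^m]e^{x+0.25x^2}.
\]

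By Lemma~\ref{216}, $[x^m]e^{x+0.25x^2}\le 2^m m^{-0.5m}$. By Lemma~\ref{lem:stirling}, $m!\le 1.1\sqrt{2\pi m}\,m^m e^{-m}$. Combining these,
\[
m!\,[x^m]e^{x+0.25x^2}\le 1.1\sqrt{2\pi m}\,m^{0.5m}(2/e)^m.
\]
Since $2/e<0.8$, this is at most $1.1\sqrt{2\pi}\,m^{0.5}\,m^{0.5m}0.8^m$. This still carries an extra constant times $\sqrt m$ compared with the target.

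The main obstacle is absorbing that factor $1.1\sqrt{2\pi m}\approx 2.76\sqrt m$ into $2n^{0.5}$. There is slack because $(2/e)^m=0.8^m(0.92)^m$, and $0.92^m$ beats small constants for moderate $m$. For small $m$ (e.g. $m=2$) we instead compare directly with $2\sqrt n\ge 2\sqrt{2m}$, and check the few remaining small cases by hand. The fallback is to use the sharper exact bound on $[x^m]e^{x+0.25x^2}$ from the saddle point in Lemma~\ref{19} rather than the crude $2^m$. With $m\le n$, we get $\sqrt m\le\sqrt n$, which yields the stated inequality.
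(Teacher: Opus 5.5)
Your proof is essentially the paper's: a union bound over block systems, the exact count of involutions in $\calS_k\wr\calS_m$, then Lemma~\ref{prior}(1) to replace $k!$ by $(1+I(k))^2/2$, then Lemma~\ref{216} and Stirling. The only loose end is the final absorption step.

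Your closing sentence, ``with $m\le n$ we get $\sqrt m\le\sqrt n$'', does not suffice, because $1.1\sqrt{2\pi}\approx 2.76>2$. However, the inequality you reserve for small $m$ actually holds for every admissible $m$. The block size $k=n/m$ is at least $2$, so $2m\le n$. Hence $1.1\sqrt{2\pi m}=1.1\sqrt{\pi}\,\sqrt{2m}\le 1.1\sqrt{\pi}\,n^{0.5}<1.95\,n^{0.5}$, and $2/e<0.8$ takes care of the rest.

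This is exactly how the paper closes the argument. No case split, $0.92^m$ slack, or saddle-point refinement is needed.
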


\begin{proof}
 
 Recall that a partition $\mathcal{B}$ of $[n]$ is called a non-trivial block system if all parts in $\mathcal{B}$ have the same size, which is greater than 1 and less than $n$.
  If $\langle x,y\rangle$ is transitive and imprimitive, then there exists 
 a nontrivial block system $\mathcal{B}$ of $[n]$ that is preserved by $x,y$. In particular, we have $1<m<n$ and $m\mid n$. Let 
 \[\Theta_n=\left\{ (x,y)\in \calI_n\times\calS_n\mid \l x,y\r \text{ preserve some non-trivial block system of $[n]$} \right\}.\]
It follows immediately that $|\calI_n|\cdot |\calS_n|\cdot \PP_{\text{\rm imp}}(n) \leq |\Theta_n|$ and 
\[ |\Theta_n| \leq \sum_{1< m < n,\ m\mid n}\left( \sum_{|\mathcal{B}|=m} |\{(x,y)\in \calI_n\times\calS_n\mid  x,y \text{ preserve $\mathcal{B}$}\}| \right).\]

In the following, to simplify the expressions, we always use $d$ to denote $n/m$.
Now, suppose $\mathcal{B}=\left\{\Omega_1,\Omega_2\ldots \Omega_m\right\}$ is a non-trivial block system. 
We need to enumerate the number of $(2,*)$-pairs that preserve $\mathcal{B}$. 
Note that all permutations  preserving $\mathcal{B}$ form a subgroup $H\cong \calS_d\wr \calS_m$ of $\calS_n$. 
Let $x$ be an involution in $H$. Then $x$ induces a permutation $x_\mathcal{B}$ on $\mathcal{B}$. Suppose that  $x_\mathcal{B}$ is a product of $p$ disjoint transpositions. 
That is $x$ swaps $p$ pairs of blocks and fixes the remaining $m-2p$ blocks. 
If $x$ swaps two blocks $\Omega_i$ and $\Omega_j$, its action on  $\Omega_i\cup \Omega_j$ is uniquely determined by a bijection between $\Omega_i$ and $\Omega_j$. 
If $x$ fixes a block $\Omega_k$, then $x|_{\Omega_k}$ is either an identity or an involution. There are $\frac{m!}{(m-2p)!2^pp!}$ ways to choose which  $p$ pairs of blocks are swapped.  This gives
\begin{align}
   & |\{(x,y)\in \calI_n\times\calS_n\mid  x,y \text{ preserve $\mathcal{B}$}\}|
   =|H |\cdot|\calI_n \cap H |   \notag
   \\=&(d!)^m\cdot m!\cdot \left(\left( \sum_{p=0}^{\lfloor 0.5m\rfloor}  \frac{m!}{(m-2p)!2^pp!}\cdot (d!)^p \cdot (1+I(d))^{m-2p} \right)-1\right) \notag
   \\<&(d!)^m\cdot m!\cdot \left( \sum_{p=0}^{\lfloor 0.5m\rfloor} \frac{m!}{(m-2p)!2^pp!}\cdot (d!)^p \cdot (1+I(d))^{m-2p} \right) \notag
   \\=& (d!)^m\cdot (m!)^2\cdot (1+I(d))^{m}  \left( \sum_{p=0}^{\lfloor 0.5m\rfloor} \frac{1}{(m-2p)!p!}\cdot \left(\frac{d!}{2(1+I(d))^{2}}\right) ^p \right) \notag
   \\<& (d!)^m\cdot (m!)^2\cdot (1+I(d))^{m}  \left( \sum_{p=0}^{\lfloor 0.5m\rfloor} \frac{0.25^p}{(m-2p)!p!} \right) \label{eqn-1}
   \\=& (d!)^m\cdot (m!)^2\cdot (1+I(d))^{m}  [x^m]e^{x+0.25x^2}  \notag
   \\<& (d!)^m\cdot (m!)^2\cdot (1+I(d))^{m}  \frac{2^m}{m^{0.5m}}, \label{eqn-2}
\end{align}
where Inequality~(\ref{eqn-1}) and Inequality~(\ref{eqn-2}) follow from Lemma~\ref{prior}(1) and Lemma~\ref{216}, respectively. Moreover, we have 
\begin{align*}
    n!\cdot I(n)\cdot\mathbb{P}_{\text{\rm imp}}(n)
    =& |\calI_n|\cdot |\calS_n|\cdot \PP_{\text{\rm imp}}(n) \leq  |\Theta_n|
    \\ < &\sum_{1< m < n,\ m\mid n}\left( \sum_{|\mathcal{B}|=m} (d!)^m\cdot (m!)^2\cdot (1+I(d))^{m}  \frac{2^m}{m^{0.5m}}\right)
    \\ < &\sum_{1< m < n,\ m\mid n}\left( n!\cdot m!\cdot (1+I(d))^{m}  \frac{2^m}{m^{0.5m}}\right).
\end{align*}
% The final result follows upon dividing both sides of the above inequality by $n!$. 
By dividing both sides of the above inequality by $n!$ and applying Lemma~\ref{lem:stirling}, we have 
\begin{align*}
     I(n)\cdot\mathbb{P}_{\text{\rm imp}}(n)
     < &\sum_{1< m < n,\ m\mid n}\left(1.1\sqrt{2\pi m}\cdot \frac{m^m}{e^m}\cdot (1+I(d))^{m}  \frac{2^m}{m^{0.5m}}\right)
     \\< &1.1\sqrt{\pi }\cdot n^{0.5}\sum_{1< m < n,\ m\mid n}\left(m^{0.5m}\cdot (1+I(d))^{m}  \frac{2^m}{e^{m}}\right)
     \\< &2\cdot n^{0.5}\sum_{1< m < n,\ m\mid n}\left(m^{0.5m}\cdot (1+I(d))^{m}  0.8^m\right).
\end{align*}
This completes the proof.
\end{proof}

Now, by applying Lemma~\ref{defimp}, we can obtain an upper bound for $\mathbb{P}_{\text{\rm imp}}(n)$ for sufficiently large $n$.

\begin{theorem}\label{thm:imp}
    There is an integer $N$ such that for $n>N$, we have
    \[\mathbb{P}_{\text{\rm imp}}(n)< 3n \cdot 0.83^n \cdot  e^{-\sqrt{n}} .\]
\end{theorem}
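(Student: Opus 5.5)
The plan is to start from Lemma~\ref{defimp}, divide by $I(n)$, and bound each summand $T(m)=m^{0.5m}(1+I(d))^m 0.8^m$, where $d=n/m$ and $1<m<n$ with $m\mid n$. The denominator is controlled by Lemma~\ref{basic fact}(3). For large $n$,
\[
I(n)\ge c\, n^{n/2}e^{-n/2+\sqrt n},
\]
with $c$ a constant slightly smaller than $e^{-1/4}/\sqrt2$. For the numerator we use $1+I(d)=d!\,[x^d]e^{x+0.5x^2}$ from Lemma~\ref{basic fact}(2). Combining Lemma~\ref{216} with Lemma~\ref{lem:stirling} gives
\[
1+I(d)\le 1.1\sqrt{2\pi d}\,d^{d/2}(3/e)^d .
\]
For small $d$ we can simply use the trivial bound $1+I(d)\le d!$, or exact values.

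Substituting, we get
\[
(1+I(d))^m\le (1.1\sqrt{2\pi d})^m\, d^{n/2}(3/e)^n ,
\]
and therefore
\[
T(m)\le (0.8\cdot 1.1\sqrt{2\pi d})^m\, m^{m/2} d^{n/2}(3/e)^n .
\]
Since $d^{n/2}m^{m/2}=n^{n/2}m^{-(n-m)/2}$, dividing by the lower bound for $I(n)$ gives
\[
\frac{T(m)}{I(n)}\le C\,(0.8\cdot1.1\sqrt{2\pi d})^m\,\bigl(3e^{-1/2}\bigr)^n m^{-(n-m)/2}e^{-\sqrt n}.
\]
When $m$ is small (bounded) and $d$ is large this crude route is too lossy, so we instead argue through the growth of $I$. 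If $m=2$, then $(1+I(n/2))^2/I(n)$ behaves like $e^{2\sqrt{n/2}-\sqrt n}\,2^{-n/2}\cdot\mathrm{const}$ by Lemma~\ref{basic fact}(3). This is where a factor $0.707^n$ appears, and together with $0.8^2$, $2^{1}$ and $2n^{0.5}$ it stays well below $0.83^n e^{-\sqrt n}$ once the exponents are handled carefully. For general fixed $m$ we similarly get $m^{-n/2}e^{(\sqrt m-1)\sqrt n}$, which is far smaller.

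The key step is therefore a case split. For $m\le \sqrt n$, say, we use the precise asymptotic of Lemma~\ref{basic fact}(3) in both numerator and denominator. This gives
\[
\frac{T(m)}{I(n)}\lesssim m^{m/2}\,0.8^m\,(e^{-1/4}/\sqrt2)^{m-1}\,m^{-n/2}\,e^{(\sqrt m-1)\sqrt n}.
\]
The largest term is $m=2$, of order $2^{-n/2}e^{(\sqrt2-1)\sqrt n}$, which is $o(0.83^n e^{-\sqrt n})$ because $2^{-1/2}<0.83$ and the subexponential factors are absorbed. For $m>\sqrt n$ we use the Lemma~\ref{216} bound above: the factor $m^{-(n-m)/2}$ together with $(\sqrt{2\pi d})^m$ makes these terms super-exponentially small when $d$ is large. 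When $d$ is small, for example $d=2$ and $m=n/2$, we compute directly: $(1+I(2))^{n/2}(n/2)^{n/4}0.8^{n/2}/I(n)$ is about $n^{-n/4}$ times an exponential, which is negligible.

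Summing over at most $n$ divisors and multiplying by $2n^{0.5}$ gives a bound of the form $3n\cdot0.83^n e^{-\sqrt n}$ for large $n$. The main obstacle is bookkeeping the intermediate range $\sqrt n<m<n/\log n$, where neither $m$ nor $d$ is bounded. There I would bound $\log(T(m)/I(n))$ as a function of $m$, show it is convex, and check its value at the endpoints of the range.
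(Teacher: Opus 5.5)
Your proof is correct, but it is organized differently from the paper's.

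\textbf{The paper's route.} The paper splits according to $d$ rather than $m$. For $d\ge N_2$ (a fixed constant $\ge 100$) it applies Lemma~\ref{basic fact}(3) to $1+I(d)$, but deliberately coarsens $e^{\sqrt d}\le e^{d/10}$. Each such term is then at most $0.68^n n^{0.5n}\, m^{0.5m-0.5n}\,0.45^m$. This is maximized at $m=2$, giving $G(n,d)<0.41\cdot 0.5^n n^{0.5n}$. For $d<N_2$ it only uses $1+I(d)\le 1+I(N_2)<n^{0.02}$ and $m\le n/2$. Dividing by $I(n)$ then produces $(0.5e^{0.5})^n e^{-\sqrt n}$, which is where the base $0.83$ comes from.

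\textbf{Your route.} You split at $m=\sqrt n$ instead. For small $m$ you keep the exact factor $e^{m\sqrt d}=e^{\sqrt{mn}}$, and for large $m$ you use Lemma~\ref{216} with Stirling.

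\textbf{What each buys.} Your route is sharper. It pins down the size $2^{-n/2}e^{(\sqrt2-1)\sqrt n}$ (up to constants) of the dominant $m=2$ term of Lemma~\ref{defimp}, so $0.83$ could be replaced by any base exceeding $1/\sqrt 2$. The paper's route is simpler. It needs only the asymptotic of Lemma~\ref{basic fact}(3), a trivial bound for bounded $d$, and a one-variable monotonicity check.

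\textbf{The ``intermediate range'' is not an obstacle.} Since
\[
\tfrac m2\log d-\tfrac{n-m}{2}\log m=\tfrac m2\log n-\tfrac n2\log m ,
\]
the logarithm of your crude bound equals
\[
\bigl(\tfrac12\log n+\log(0.88\sqrt{2\pi})\bigr)m-\tfrac n2\log m+O(n).
\]
This is convex in $m$ and equals $-\tfrac n4\log n+O(n)$ at both $m=\sqrt n$ and $m=n/2$. So all $m>\sqrt n$ are super-exponentially negligible at once, which is exactly what your own convexity-and-endpoints plan would show.
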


\begin{proof}
  As in the proof of the previous lemma, let $d=n/m$.
    By Lemma~\ref{basic fact}(3), there exists an integer $N_1$, such that for any $d\geq N_1$, 
    \begin{equation*}
        0.99<\frac{I(d)}{\frac{1}{\sqrt{2}} d^{0.5d}e^{-0.5d+\sqrt{d}-0.25}}  \hspace{3ex}\text{and}\hspace{3ex}   \frac{1+I(d)}{\frac{1}{\sqrt{2}} d^{0.5d}e^{-0.5d+\sqrt{d}-0.25}}<1.01.
    \end{equation*}
    Let $N_2=\max\{N_1,100\}$ and let 
    \begin{equation*}
        G(n,d) =  (\frac{n}{d})^{0.5n/d}\cdot (1+I(d))^{n/d}  0.8^{n/d}= m^{0.5m}\cdot (1+I(d))^{m}  0.8^m.
    \end{equation*}
    
    If $d\geq N_2$, then 
    \begin{align*}
        G(n,d) & < m^{0.5m}\cdot (\frac{1.01\cdot 0.8}{\sqrt{2} \cdot e^{0.25}} d^{0.5d} e^{-0.5d+d/\sqrt{100}})^m  
       \\ & < m^{0.5m}\cdot 0.45^m ( d^{0.5d} e^{-0.5d+d/\sqrt{100}})^m  
       \\ & < m^{0.5m}\cdot 0.45^m ( (\frac{n}{m})^{0.5d} 0.68^d)^m  
       \\ & < 0.68^n \cdot n^{0.5n} \cdot m^{0.5m-0.5n}\cdot 0.45^m .
    \end{align*}
For a given $n$, by taking the derivative, we can see that the function $m^{0.5m-0.5n}\cdot 0.45^m$ attains its maximum on the  the interval $[2, n/N_2]$ at $m=2.$ That is 
\begin{equation}
    \label{eqn:d>N2}
    G(n,d) <0.68^n \cdot n^{0.5n} \cdot 2^{1-0.5n}\cdot 0.45^2 = (0.45^2\cdot 2)(\frac{0.68}{\sqrt{2}})^n\cdot n^{0.5n} <0.41\cdot 0.5^n\cdot n^{0.5n}.
\end{equation}

 If $d<N_2$, then $G(n,d) < (0.5n)^{0.25n}\cdot (1+I(N_2))^{0.5n} $ since $m\leq 0.5n$. 
Note that there exists an integer $N>N_2$, such that for $n>N$, we have \[ 1+I(N_2)<n^{0.02}\hspace{3ex}\text{and}\hspace{3ex} 0.5^{0.25n}n^{0.26n}<0.41\cdot 0.5^n\cdot n^{0.5n}. \] 
It follows that 
\begin{align*}
    G(n,d)&<  (0.5n)^{0.25n}\cdot (n^{0.02})^{0.5n}=0.5^{0.25n}n^{0.26n}<0.41\cdot 0.5^n\cdot n^{0.5n}
\end{align*}
whence $n>N$ and $d<N_2.$

Now, since the number of positive divisors of $n$ is at most $2\cdot n^{0.5}$, by Lemma~\ref{defimp}, we have 
\begin{align*}
   \PP_{\text{\rm imp}}(n) &\leq  \frac{1}{I(n)}\cdot 2\cdot n^{0.5}\sum_{1< d < n,\ d\mid n}G(n,d)= \frac{1}{I(n)}\cdot 4 n \cdot \left(0.41\cdot 0.5^n\cdot n^{0.5n} \right)
   \\&< \frac{0.99}{\frac{1}{\sqrt{2}} n^{0.5n}e^{-0.5n+\sqrt{n}-0.25}} \cdot 4 n \cdot \left(0.41\cdot 0.5^n\cdot n^{0.5n} \right)
   \\&<(0.99\cdot \sqrt{2}\cdot e^{0.25}\cdot 0.41\cdot 4)\cdot  \frac{n\cdot 0.5^n \cdot n^{0.5n}\cdot e^{0.5n}}{ n^{0.5n}e^{\sqrt{n}}} 
   \\&<3n \cdot 0.83^n \cdot  e^{-\sqrt{n}}.
\end{align*}
This completes the proof.
\end{proof}

\subsection{\texorpdfstring{Proportion of generating pairs of $\mathcal{A}_n$ and $\mathcal{S}_n$}{Proportion of generating pairs of An and Sn}}\label{subsec:byproduct} \ 

To prove Theorem~\ref{thm-byproduct}, we need to consider the proportion of $(2,*)$-pairs in $\calS_n$ that generate neither $\calS_n$ nor $\calA_n$. For this purpose,  we need the following  two key lemmas proved by Dixon and Jordan.

\begin{lemma}\label{113}
{\rm\cite[Lemma~3]{dixon-1969}}
For a prime $q\leq n-3$, define $\mathcal{C}_{q,n}$ to be the set of 
permutations in $\mathcal{S}_n$ whose cycle decomposition has exactly one cycle of length $q$ and all other cycles of length relatively prime to $q$. 
Let $\mathcal{T}_n=\bigcup_{q} \mathcal{C}_{q,n}$, where the union is over all primes $q$ with $(\ln n)^2\leq q\leq n-3$. Then there exists an integer
$N$ such that for all $n>N$, the proportion of elements of $\mathcal{S}_n$ lying in $\mathcal{T}_n$
is at least $1-\frac{4}{3\ln\ln n}$.
\end{lemma}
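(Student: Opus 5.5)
The plan is a first/second moment argument on cycle counts of a uniformly random $\sigma\in\calS_n$. It rests on two exact facts. For $a\le n$, the expected number of $a$-cycles of $\sigma$ is $1/a$. For $a+b\le n$, the expected number of ordered pairs of distinct cycles of $\sigma$ with lengths $a$ and $b$ is $1/(ab)$. Both follow by counting: choose the support, then the cyclic arrangement, then an arbitrary permutation of the rest.

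Let $P_n$ be the set of primes $q$ with $(\ln n)^2\le q\le n-3$, and let $X$ be the number of cycles of $\sigma$ whose length lies in $P_n$. The key observation is the containment
\[
\calS_n\setminus\mathcal{T}_n\subseteq\{X=0\}\cup\{\sigma \text{ has a } q\text{-cycle}, q\in P_n, \text{ and another cycle of length divisible by } q\}.
\]
To see it, suppose $\sigma$ has a $q$-cycle for some $q\in P_n$ but $\sigma\notin\mathcal{C}_{q,n}$. Then some other cycle of $\sigma$ has length not coprime to $q$, that is, divisible by $q$, since $q$ is prime. Both events on the right are then bounded separately.

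For the second event, a union bound over $q$ and over the multiple $kq$ of the second cycle length gives at most
\[
\sum_{q\in P_n}\frac1q\sum_{k\ge1}\frac{1}{kq}\le(1+\ln n)\sum_{q\ge(\ln n)^2}\frac1{q^2}\le\frac{1+\ln n}{(\ln n)^2-1},
\]
which is $O(1/\ln n)$.

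For the first event, I would use Chebyshev's inequality. Put $\mu=\mathbb{E}X=\sum_{q\in P_n}1/q$. Then $\mathbb{E}[X(X-1)]$ is a sum over ordered pairs of distinct cycles with lengths in $P_n$. Each pair of lengths $(q,q')$ contributes $1/(qq')$ when $q+q'\le n$ and $0$ otherwise, so $\mathbb{E}[X(X-1)]\le\mu^2$. Hence $\mathrm{Var}X\le\mu$, and
\[
\PP(X=0)\le \mathrm{Var}X/\mu^2\le 1/\mu.
\]
By Mertens' theorem, $\mu=\ln\ln(n-3)-\ln\ln((\ln n)^2)+o(1)=\ln\ln n-\ln(2\ln\ln n)+o(1)$, so $\mu/\ln\ln n\to1$.

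Combining, the proportion of $\calS_n\setminus\mathcal{T}_n$ is at most $\frac{1}{\mu}+O\big(\frac1{\ln n}\big)=\frac{1+o(1)}{\ln\ln n}$. This is below $\frac{4}{3\ln\ln n}$ for all large $n$, which gives $N$.

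The main obstacle I expect is the asymptotics of $\mu$. The $\ln\ln\ln n$ loss makes $1/\mu$ slightly larger than $1/\ln\ln n$, and this is exactly why the statement has the slack factor $4/3$. One needs an effective Mertens-type estimate (with an $o(1)$ error) to conclude that $1/\mu\le(1+\delta)/\ln\ln n$ eventually for, say, $\delta=1/6$. The remaining care is verifying the exact pair-count identity $1/(ab)$, including the case $a=b$, which gives $\mathbb{E}[X(X-1)]\le\mu^2$.
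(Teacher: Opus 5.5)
Your proof is correct and complete. The paper itself does not prove this lemma: it is quoted from Dixon's 1969 paper and used as a black box in the proof of Theorem~\ref{115}. So there is no internal argument to compare yours against, and what you supply is a self-contained second-moment proof.

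The ingredients all check out:
\begin{itemize}
\item The exact identities $\mathbb{E}[\#\,a\text{-cycles}]=1/a$, and $\mathbb{E}[\#\,\text{ordered pairs of distinct cycles of lengths } a,b]=1/(ab)$ for $a+b\le n$ (including $a=b$), give $\mathbb{E}[X(X-1)]\le\mu^2$. Hence $\mathrm{Var}\,X\le\mu$ and $\PP(X=0)\le 1/\mu$.
\item The containment of $\mathcal{S}_n\setminus\mathcal{T}_n$ in $\{X=0\}$ together with the event ``a $q$-cycle plus another cycle of length divisible by $q$'' is right, because $q$ is prime.
\item The union bound over pairs $(q,kq)$ gives exactly $(1+\ln n)/((\ln n)^2-1)=1/(\ln n-1)$.
\end{itemize}

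Two remarks. First, the step you flag as the main obstacle is milder than you suggest. You need only $\sum_{p\le x}1/p=\ln\ln x+O(1)$, with no effective $o(1)$ error term. Both the $O(1)$ term and the loss $\ln(2\ln\ln n)$ are $o(\ln\ln n)$, so $\mu/\ln\ln n\to 1$ either way.

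Second, your argument actually shows that the exceptional proportion is $(1+o(1))/\ln\ln n$, so the factor $4/3$ is pure slack. The $1/\ln\ln n$ rate of the stated bound is exactly what a Chebyshev-type argument produces.
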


\begin{lemma}\label{114}
{\rm\cite[Theorem 13.9]{wielandt64}}
A primitive subgroup of $\mathcal{S}_n$ is equal to either $\mathcal{A}_n$ or $
\mathcal{S}_n$ whenever it contains
at least one permutation which is a q-cycle
for some prime $q\leq n-3$.
\end{lemma}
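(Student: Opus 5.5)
Since this is a classical theorem of Jordan, the plan is to reproduce the standard argument rather than anything specific to this paper. Let $G\le \calS_n$ be primitive on $\Gamma=[n]$ and let $g\in G$ be a $q$-cycle with support $\Delta$, so $|\Delta|=q$ and $|\Gamma\setminus\Delta|=n-q\ge 3$. It suffices to show that $G$ contains a $3$-cycle, because a primitive group containing a $3$-cycle contains $\calA_n$. That last fact is itself a short standard argument: the $3$-cycles in $G$ generate a normal subgroup whose orbits are blocks of $G$, so it is transitive; one then builds up alternating groups on growing Jordan sets.

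\textbf{Step 1 (Jordan sets).} Call $\Lambda\subseteq\Gamma$ a Jordan set if the pointwise stabiliser $G_{(\Gamma\setminus\Lambda)}$ is transitive on $\Lambda$. Then $\Delta$ is a Jordan set, witnessed by $\langle g\rangle$, and $G_{(\Gamma\setminus\Delta)}$ is even primitive on $\Delta$ because $q$ is prime. Since $1<q<n$ and $G$ is primitive, $\Delta$ is not a block. Hence there is $h\in G$ with $\Delta^h\cap\Delta\neq\emptyset$ and $\Delta^h\neq\Delta$. The subgroup $\langle g,g^h\rangle$ fixes $\Gamma\setminus(\Delta\cup\Delta^h)$ pointwise and is transitive on $\Delta\cup\Delta^h$, so $\Delta\cup\Delta^h$ is a strictly larger Jordan set.

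\textbf{Step 2 (iteration).} Iterating Step 1 gives Jordan's classical consequence: if a primitive group has a Jordan set whose complement has size $m$, with the stabiliser primitive on it, then $G$ is $(m+1)$-transitive. Applied to $\Delta$, this makes $G$ $(n-q+1)$-transitive, and in particular at least $4$-transitive.

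\textbf{Step 3 (produce a $3$-cycle).} I would use the multiple transitivity from Step 2 to choose a conjugate $g'=g^h$ whose support meets $\Delta$ in a prescribed way. If $2q-1\le n$, I would use $(n-q+1)$-transitivity to arrange $|\Delta\cap\Delta^h|=1$. Then $[g,g']$ is a $3$-cycle, by the standard computation for two cycles sharing exactly one point, and we are done.

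If instead $q>(n+1)/2$, the supports are forced to overlap in many points. Here I would first pass to the pointwise stabiliser of points of $\Gamma\setminus\Delta$: by the Jordan-set structure it acts primitively, in fact multiply transitively, on a set slightly larger than $\Delta$. I would then take a commutator $[g,g^h]$ in which $h$ moves only a couple of points of $\Delta$ and fixes the rest. Such an $h$ exists by the high transitivity, since it fixes $n-q+1\ge 4$ points in a prescribed way. The support of $[g,g^h]$ is then concentrated near the moved points, which gives an element of small support, and I expect to extract a $3$-cycle from it, possibly via one further commutator or power.

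The main obstacle is this large-$q$ case of Step 3, that is, controlling the support of commutators of overlapping $q$-cycles. If it resists a direct computation, I would instead fall back on the full strength of Jordan's theorem in Wielandt's form (Theorem 13.9 there), namely that a primitive group with a prime-length Jordan set of co-size at least $3$ is $\calA_n$ or $\calS_n$, and simply cite it, as the paper does.
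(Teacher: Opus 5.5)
The paper gives no proof of this lemma; it simply quotes Wielandt's Theorem~13.9, so what needs checking is your reconstruction. Much of it is sound. $\Delta$ is a Jordan set on which $G_{(\Gamma\setminus\Delta)}$ is primitive. Jordan's theorem on such sets makes $G$ $(n-q+1)$-transitive; this is a genuine citation, since merely iterating Step~1 only produces a chain of Jordan sets. When $2q-1\le n$ you have $q\le n-q+1$, so $\Delta$ can be moved onto a $q$-set meeting it in exactly one point, and then $[g,g^h]$ is a $3$-cycle.

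The gap is the case $q>(n+1)/2$, which the lemma certainly covers. There you claim that high transitivity supplies an $h$ moving only a couple of points of $\Delta$ and fixing the rest. It does not. $(n-q+1)$-transitivity prescribes the images of only $n-q+1<q$ points and gives no control over $h$ on the remaining points of $\Delta$. Indeed, an element of $G$ with such small support is essentially what you are trying to produce. So the only information about $[g,g^h]$ is that its support lies in $\Delta\cup\Delta^h$, a set of size at least $q+1$. There is no reason for it, or any power or further commutator of it, to be a $3$-cycle. Falling back on the citation just reproduces the paper, so for large $q$ your proposal contains no proof.

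The missing idea is a Sylow--Frattini argument, which in fact handles every $q$ at once.
\begin{itemize}
\item Put $\Sigma=\Gamma\setminus\Delta$, $m=|\Sigma|=n-q\ge 3$, $H=G_{(\Sigma)}$ and $K=G_{\{\Sigma\}}$. Then $H\trianglelefteq K$, and $H$ acts faithfully on the $q$-set $\Delta$. Hence $|H|$ divides $q!$, and $P=\langle g\rangle$ is a Sylow $q$-subgroup of $H$. The Frattini argument gives $K=H\,N_K(P)$.
\item Since $G$ is $m$-transitive, $K$ induces $\mathrm{Sym}(\Sigma)$ on $\Sigma$. As $H$ is trivial on $\Sigma$, $N_K(P)$ also induces $\mathrm{Sym}(\Sigma)$ on $\Sigma$.
\item On $\Delta$, however, $N_K(P)$ induces a subgroup of $N_{\mathrm{Sym}(\Delta)}(P|_{\Delta})\cong\mathrm{AGL}_1(q)$, which is abelian modulo $P|_{\Delta}$. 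Therefore $N_K(P)'$ induces $\mathrm{Alt}(\Sigma)$ on $\Sigma$, while each of its elements acts on $\Delta$ as an element of $P|_{\Delta}$.
\item Choose $y\in N_K(P)'$ inducing a $3$-cycle on $\Sigma$; this is exactly where $n-q\ge 3$ is needed. If $y$ agrees with $g^j$ on $\Delta$, then $yg^{-j}$ is a $3$-cycle in $G$.
\end{itemize}
Your opening reduction then gives $G\ge\calA_n$.
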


We now  prove that almost all $(2,*)$-pairs of $\calS_n$ generate either $\calA_n$ or $\calS_n$.

\begin{theorem}\label{115}
 Let $\PP_{\text{\rm AS}}(n)$ be the proportion of $(2,*)$-pairs in $\calS_n$ generating either $\calA_n$ or $\calS_n$.  
 Then,  for sufficiently large $n$,  we have 
 $\PP_{\text{\rm AS}}(n)  \geq 1-\frac{1.5}{\ln\ln n}$.
\end{theorem}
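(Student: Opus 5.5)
We bound the complement: a $(2,*)$-pair $(x,y)\in\calI_n\times\calS_n$ fails to generate $\calA_n$ or $\calS_n$ only if one of three things happens. Either $\langle x,y\rangle$ is intransitive, or it is transitive but imprimitive, or it is primitive but not containing $\calA_n$. Hence
\[
1-\PP_{\text{\rm AS}}(n)\le \PP_{\text{\rm int}}(n)+\PP_{\text{\rm imp}}(n)+\PP_{\text{\rm prim}}(n),
\]
where $\PP_{\text{\rm prim}}(n)$ denotes the proportion of pairs generating a primitive subgroup other than $\calA_n,\calS_n$. The first two terms are handled by Theorem~\ref{14} and Theorem~\ref{thm:imp}. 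For large $n$ we have $\PP_{\text{\rm int}}(n)<3n^{-0.5}$ and $\PP_{\text{\rm imp}}(n)<3n\cdot 0.83^n e^{-\sqrt n}$. Both are $o(1/\ln\ln n)$, so together they are eventually below $\frac{1}{6\ln\ln n}$.

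For the primitive term, the key point is that $x$ and $y$ are chosen independently and $y$ is uniform in $\calS_n$. Condition on $y\in\mathcal{T}_n$. Then $y\in\mathcal{C}_{q,n}$ for some prime $q$ with $(\ln n)^2\le q\le n-3$. Write $y$ as a product of one $q$-cycle $c$ and other cycles whose lengths are coprime to $q$. Let $L$ be the lcm of those other lengths, so $\gcd(L,q)=1$. Then $y^{L}=c^{L}$ is a $q$-cycle, being a nontrivial power of a $q$-cycle with $q$ prime. So $\langle x,y\rangle$ contains a $q$-cycle with $q\le n-3$. If $\langle x,y\rangle$ is primitive, Lemma~\ref{114} forces it to be $\calA_n$ or $\calS_n$.

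Therefore a pair with $y\in\mathcal{T}_n$ generating a primitive group never falls in the bad primitive case. This gives
\[
\PP_{\text{\rm prim}}(n)\le \frac{|\calS_n\setminus\mathcal{T}_n|}{n!}\le \frac{4}{3\ln\ln n}
\]
for large $n$, by Lemma~\ref{113}. This bound does not depend on $x$, so the involution distribution plays no role here.

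Adding the three bounds gives $1-\PP_{\text{\rm AS}}(n)\le \frac{4}{3\ln\ln n}+\frac{1}{6\ln\ln n}=\frac{1.5}{\ln\ln n}$ for all sufficiently large $n$, which is the claim. The only delicate step is the primitive case, specifically checking that the power $y^L$ is a genuine $q$-cycle. This needs $q\nmid L$, which is exactly the coprimality built into $\mathcal{C}_{q,n}$, and $q\le n-3$, which is needed for Lemma~\ref{114}. The rest is bookkeeping: confirm that the constants from Theorem~\ref{14} and Theorem~\ref{thm:imp} decay faster than $1/\ln\ln n$, so they fit in the slack $1.5-\frac43=\frac16$.
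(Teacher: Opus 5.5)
Your proof is correct and follows essentially the same route as the paper: a union bound over the intransitive, transitive-imprimitive, and primitive-but-not-$\mathcal{A}_n$/$\mathcal{S}_n$ cases, using Theorem~\ref{14}, Theorem~\ref{thm:imp}, Lemma~\ref{113} and Lemma~\ref{114}, with the $O(n^{-1/2})$ terms absorbed into the slack $\frac{1.5}{\ln\ln n}-\frac{4}{3\ln\ln n}$. You also spell out a step the paper leaves implicit: for $y\in\mathcal{C}_{q,n}$, the power $y^{L}$ is a genuine $q$-cycle, which is what Jordan's theorem needs.
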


\begin{proof}
By Theorem~\ref{14},  there exists an integer $N_1$ such that for any $n>N_1$, 
we have $$\PP_{\text{\rm int}}(n) \leq \frac{2.6}{\sqrt{n}}.$$
By Theorem~\ref{thm:imp}, there exists an integer $N_2$ such that for any $n>N_2$, 
we have $$\PP_{\text{\rm imp}}(n) \leq  3n\cdot 0.83^n \cdot e^{-\sqrt{n}} <\frac{0.4}{\sqrt{n}}.$$
By Lemma~\ref{113}, there exists an integer $N_3$ such that for any $n>N_3$,
the proportion of $(2,*)$-pairs $(x,y)$ with $y\in \mathcal{T}_n$ is at least $1-\frac{4}{3\ln\ln n}$. Note that  there exists an integer $N_4$ such that for any $n>N_4$, $1-\frac{3}{\sqrt{n}}-\frac{4}{3\ln\ln n}> 1-\frac{1.5}{\ln\ln n}$. 
Therefore, by taking $N=\max\{N_1,N_2,N_3,N_4\}$, we have 
\[ \PP_{\text{\rm AS}}(n) \geq  1-\frac{4}{3\ln\ln n} -\PP_{\text{\rm int}}(n)-\PP_{\text{\rm imp}}(n)> 1-\frac{1.5}{\ln\ln n},\]
which completes the proof.
\end{proof}

To give the proof of Theorem~\ref{thm-byproduct}, we also need to know the proportion of even involutions in $\calI_n$. 
It seems to be a well-known fact that the proportion is $0.5$. However, we have not seen an explicit statement in the literature. Therefore, we present the proof below.

\begin{lemma}\label{117}
    The proportion of even involutions in $\calI_n$ tends to $\frac{1}{2}$ as $n\to \infty$.
\end{lemma}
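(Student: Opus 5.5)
An involution with $k$ transpositions is even exactly when $k$ is even. So the plan is to compare even and odd involutions through a signed generating function and show their difference is negligible compared with $I(n)$. Write $E(n)$ and $O(n)$ for the numbers of even and odd elements of $\calI_n\cup\{1\}$. The elements of $\calS_n$ with all cycles of length at most $2$ have exponential generating function $e^{x+0.5x^2}$, where $x^2$ marks a transposition. Replacing $x^2/2$ by $-x^2/2$ weights each element by $(-1)^k$, so
\[
E(n)-O(n)=n!\,[x^n]e^{x-0.5x^2}, \qquad E(n)+O(n)=I(n)+1=n!\,[x^n]e^{x+0.5x^2},
\]
the second identity being Lemma~\ref{basic fact}(2). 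The number of even involutions is $E(n)-1$. It therefore suffices to show
\[
\frac{n!\,[x^n]e^{x-0.5x^2}}{I(n)}\to 0 ,
\]
since the proportion of even involutions equals $\frac{1}{2}\bigl(1+\frac{E(n)-O(n)}{I(n)+1}\bigr)$ up to an $O(1/I(n))$ error.

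To bound the numerator, I would apply Lemma~\ref{19} to $f(z)=e^{z-0.5z^2}$. The coefficients of this function alternate in sign, so I would use the general bound $M(f,r)/r^n$ and not the positive-coefficient version. On $|z|=r$ we have $|e^{z-0.5z^2}|=e^{\mathrm{Re}(z)-0.5\,\mathrm{Re}(z^2)}\le e^{r+0.5r^2}$, which gives
\[
\bigl|[x^n]e^{x-0.5x^2}\bigr|\le \frac{e^{r+0.5r^2}}{r^n}.
\]
Choosing $r=\sqrt n$ yields $n!\,|[x^n]e^{x-0.5x^2}|\le n!\,e^{\sqrt n+0.5n}n^{-0.5n}$. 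That bound is of order $n^{0.5n}e^{-0.5n}$ times lower-order factors, which is comparable to $I(n)$ and so too weak. A sharper estimate of $M(f,r)$ is needed, and this is the main obstacle.

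To get it, write $z=re^{i\theta}$. Then
\[
\mathrm{Re}(z)-0.5\,\mathrm{Re}(z^2)=r\cos\theta-0.5r^2\cos 2\theta=r\cos\theta-r^2\cos^2\theta+0.5r^2 .
\]
Setting $c=\cos\theta$, the maximum over $c$ of $rc-r^2c^2$ is $1/4$, attained at $c=1/(2r)$. Hence $M(f,r)\le e^{0.5r^2+1/4}$. With $r=\sqrt n$ this gives
\[
n!\,\bigl|[x^n]e^{x-0.5x^2}\bigr|\le n!\,e^{0.5n+1/4}n^{-0.5n}.
\]
By Lemma~\ref{lem:stirling} the right-hand side is $O\bigl(\sqrt n\, n^{0.5n}e^{-0.5n}\bigr)$. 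By Lemma~\ref{basic fact}(3), $I(n)\sim \frac{1}{\sqrt2}n^{0.5n}e^{-0.5n+\sqrt n-1/4}$. The ratio is therefore $O(\sqrt n\,e^{-\sqrt n})\to 0$, which completes the argument. The only delicate point is the maximization over $\theta$; once it is done, everything else is direct substitution into earlier lemmas.
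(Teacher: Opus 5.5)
Your proposal is correct and follows essentially the same route as the paper: express the difference between even and odd elements of $\calI_n\cup\{1\}$ as $n!\,[x^n]e^{x-0.5x^2}$, bound this coefficient via Lemma~\ref{19} with $r=\sqrt n$ and the refined estimate $M(f,r)\le e^{0.5r^2+1/4}$ (from maximizing $rc-r^2c^2$), then compare with $I(n)$ using Lemma~\ref{lem:stirling} and Lemma~\ref{basic fact}(3). The only cosmetic difference is that you obtain the signed count by substituting $x^2/2\mapsto -x^2/2$ in the exponential generating function, whereas the paper writes the number of even involutions as an explicit sum and identifies it with half the sum of the two coefficients.
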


\begin{proof}
Note that an even involution is a permutation that is a product of an even number of disjoint transpositions. Hence 
\begin{align*}
|\calI_n\cap \calA_n|&=\sum_{t=1}^{\lfloor 0.25n\rfloor}\frac{n!}{(n-4t)!(2t)!2^{2t}}
=\frac{n!}{2}\Big(\sum_{k=1}^{\lfloor 0.5n\rfloor}\frac{0.5^k}{(n-2k)!k!}+\sum_{k=1}^{\lfloor 0.5n\rfloor}\frac{(-0.5)^k}{(n-2k)!k!}\Big)\\
&=\frac{n!}{2}([x^n]e^{x+0.5x^2}+[x^n]e^{x-0.5x^2})-1.
\end{align*}
By Lemma~\ref{basic fact}(2), we have
\begin{align}
\lim\limits_{n\to\infty} \frac{|\mathcal I_n \cap \mathcal A_n|}{|\mathcal I_n|}&=\lim\limits_{n\to\infty} \frac{n!\Big (\frac{[x^n]e^{x+0.5x^2}+[x^n]e^{x-0.5x^2}}{2}\Big)-1}{n![x^n]e^{x+0.5x^2}-1}=\lim\limits_{n\to\infty} \frac{n!\Big(\frac{[x^n]e^{x+0.5x^2}+[x^n]e^{x-0.5x^2}}{2}\Big)}{n![x^n]e^{x+0.5x^2}}\notag\\
&=0.5+0.5\lim\limits_{n\to\infty} \frac{[x^n]e^{x-0.5x^2}}{[x^n]e^{x+0.5x^2}}. \notag
\end{align}
Let $f(z)=e^{z-0.5z^2}$. Then for any $r>0$, 
\[ M(f,r):=\sup_{|z|=r}|f(z)|=\sup_{\theta\in [0,2\pi)}\Big|f(re^{i\theta})\Big|=\sup_{\theta\in [0,2\pi)}e^{-(r\cos\theta)^2+r\cos\theta +0.5r^2}
<e^{0.5r^2+0.25}.\]
 Applying Lemma~\ref{19} with $r=\sqrt{n}$, we obtain 
$\Big|[z^n]f(z)\Big|\leq \frac{e^{0.5n+0.25}}{n^{0.5n}}.$
Note that 
$$\Bigg|\frac{[x^n]e^{x-0.5x^2}}{[x^n]e^{x+0.5x^2}}\Bigg|\leq \frac{n!\frac{e^{0.5n+0.25}}{n^{0.5n}}}{[x^n]e^{x+0.5x^2} }, $$
and by Lemma~\ref{basic fact}(2) and Lemma~\ref{basic fact}(3),
$$\lim\limits_{n\to\infty}\frac{n!\frac{e^{0.5n+0.25}}{n^{0.5n}}}{[x^n]e^{x+0.5x^2}}=\lim\limits_{n\to\infty}\frac{\sqrt{2\pi n}\frac{n^n}{e^n}\frac{e^{0.5n+0.25}}{n^{0.5n}}}{\frac{n^{0.5n}}{\sqrt{2}}e^{-0.5n+\sqrt{n}-0.25}}=0. $$
Thus, $\lim\limits_{n\to\infty} \frac{[x^n]e^{x-0.5x^2}}{[x^n]e^{x+0.5x^2}}=0.$ 
Thus \begin{align*}
\lim\limits_{n\to\infty} \frac{|\mathcal I_n \cap \mathcal A_n|}{|\mathcal I_n|}=0.5+0.5\lim\limits_{n\to\infty} \frac{[x^n]e^{x-0.5x^2}}{[x^n]e^{x+0.5x^2}}=0.5,
\end{align*}
which completes the proof. 
\end{proof}

\begin{proof}[Proof of Theorem~\ref{thm-byproduct}]
    The first half of the theorem follows directly from Theorem~\ref{115}. 

    Let $\PP_{\rm A}(n)$ and $\PP_{\rm S}(n)$ be the proportions in $(2,*)$-pairs of $\calS_n$ that generate $\calA_n$ and $\calS_n$, respectively. Then 
$\PP_{\rm S}(n)\leq 1- \frac{|\calI_n\cap\calA_n||\calA_n|}{|\calI_n||\calS_n|}$ and $\PP_{\rm A}(n)\leq \frac{|\calI_n\cap\calA_n||\calA_n|}{|\calI_n||\calS_n|}$. By Lemma~\ref{117}, we have $\displaystyle\varlimsup_{n\to\infty}\PP_{\rm A}(n)\leq \frac{1}{4}$ and $\displaystyle\varlimsup_{n\to\infty}\PP_{\rm S}(n)\leq \frac{3}{4}.$ 
By Theorem~\ref{115},  we have 
$\displaystyle \lim_{n\to \infty} (\PP_{\rm A}(n) +\PP_{\rm S}(n)) =1.$ 
Therefore, 
\[ \varliminf_{n\to\infty}\PP_{\rm A}(n)=\varliminf_{n\to\infty}(\PP_{\rm A}(n)+\PP_{\rm S}(n)-\PP_{\rm S}(n) ) =\lim_{n\to\infty}(\PP_{\rm A}(n)+\PP_{\rm S}(n) ) -\varlimsup_{n\to\infty}\PP_{\rm S}(n) \geq \frac{1}{4}. \]
This gives $\displaystyle\lim_{n\to \infty} \PP_{\rm A}(n) =\frac{1}{4}.$ Similarly, we have 
 $\displaystyle\lim_{n\to \infty} \PP_{\rm S}(n) =\frac{3}{4}$, which completes the proof.
\end{proof}

Note that the proportion of $(2,*)$-pairs in $\calS_n$ that lie in $\calA_n$  also tends  to $\frac{1}{4}$. This leads to the following corollary.

\begin{corollary}\label{coro:liebeck-shalev}
    The probability that a randomly chosen involution in $\calA_n$ and a randomly chosen additional element generate $\calA_n$ tends to $1$ as $n\to \infty$.
\end{corollary}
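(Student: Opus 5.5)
The idea is to compare the set of $(2,*)$-pairs of $\calA_n$ with the set of $(2,*)$-pairs of $\calS_n$, where we already know the generation behaviour. Let $\Omega_n=(\calI_n\cap\calA_n)\times\calA_n$ be the set of pairs consisting of an involution of $\calA_n$ and an arbitrary element of $\calA_n$. Viewed inside $\calI_n\times\calS_n$, it occupies the proportion
\[
\rho_n=\frac{|\calI_n\cap\calA_n|\,|\calA_n|}{|\calI_n|\,|\calS_n|}=\frac{1}{2}\cdot\frac{|\calI_n\cap\calA_n|}{|\calI_n|}.
\]
By Lemma~\ref{117}, $\rho_n\to\frac14$. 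This is the observation made just before the corollary.

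Next, any pair generating $\calA_n$ lies in $\Omega_n$. So the number of pairs in $\Omega_n$ generating $\calA_n$ is exactly $\PP_{\rm A}(n)\,|\calI_n|\,|\calS_n|$. The desired probability is therefore
\[
\frac{\PP_{\rm A}(n)\,|\calI_n|\,|\calS_n|}{|\Omega_n|}=\frac{\PP_{\rm A}(n)}{\rho_n}.
\]
By Theorem~\ref{thm-byproduct}, $\PP_{\rm A}(n)\to\frac14$. Combined with $\rho_n\to\frac14$, the ratio tends to $1$.

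The only point that needs checking is that the corollary's "randomly chosen involution in $\calA_n$ and additional element" is uniform on $\Omega_n$, with the second element taken from $\calA_n$. This is the natural reading. If the second element were instead drawn from $\calS_n$, the pair could generate $\calA_n$ only when that element happens to lie in $\calA_n$, so the probability would tend to $\frac12$ rather than $1$. The statement therefore forces the interpretation that both elements lie in $\calA_n$.

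There is no real obstacle here. All the analytic work is contained in Theorem~\ref{thm-byproduct} and Lemma~\ref{117}, and the corollary is the quotient of two limits, both equal to $\frac14$, with a nonzero denominator.
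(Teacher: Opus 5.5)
Your proposal is correct and is the paper's own argument. The paper notes that the proportion of $(2,*)$-pairs of $\calS_n$ lying in $(\calI_n\cap\calA_n)\times\calA_n$ tends to $\frac14$ (via Lemma~\ref{117}), combines this with $\PP_{\rm A}(n)\to\frac14$ from Theorem~\ref{thm-byproduct}, and leaves the rest as an ``exercise in elementary calculus''; your quotient $\PP_{\rm A}(n)/\rho_n\to 1$ simply writes that exercise out.
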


\begin{rem}
Corollary~\ref{coro:liebeck-shalev} was proved by Liebeck and Shalev in \cite{Liebeck-Shalev-23gen}. Theorem~\ref{thm-byproduct} provides an alternative proof of this result.  
From the proof of this subsection, it is not difficult to see that, given Theorem~\ref{115} and Lemma~\ref{117}, Corollary~\ref{coro:liebeck-shalev} is just a straightforward exercise in elementary calculus. However, the converse is not true. All our attempts to derive Theorem~\ref{115} from Corollary~\ref{coro:liebeck-shalev} and Lemma~\ref{117} have been unsuccessful.
\end{rem}

\section{Proportion of chiral maps and chiral hypermaps
% \texorpdfstring{Proportion of chiral maps and hypermaps with automorphism group $\mathcal{S}_{n}$}{Proportion of chiral maps with automorphism group Sn}
}
\label{sec:proof-of-main-thm}

In this section, we prove Theorem~\ref{main-thm-1} and Theorem~\ref{main-thm-2}.

\subsection{ Chiral maps
% Estimation of \texorpdfstring{$\mathbb{P}_{ch}(\mathcal{S}_n)$}{Pch(Sn)} and \texorpdfstring{$\mathbb{P}_{ch}(\mathcal{A}_n)$}{Pch(An)}
} \

In this subsection, we aim to show Theorem~\ref{main-thm-1}. By the discussion in the Introduction,  we only need  to show the proportion of reflexible $(2,*)$-generating pairs of $\calS_n$ or $\calA_n$ tends to $0$ as $n$ tends to infinity. We first deal with the case where the automorphism group is $\calS_n$.
Let $\calG_n$ be the set of $(2,*)$-generating pairs of $\calS_n$ and let $\calR_n$ be the set of 
 $(2,*)$-generating pairs that correspond to reflexible maps. That is 
\begin{align*}
    \calG_n& =\left\{ 
(x,y)\in \mathcal{I}_n \times \mathcal{S}_n|\langle x,y\rangle =\mathcal{S}_n\right\},\\
\calR_n&=\left\{ 
(x,y)\in \calG_n|\text{there exists } g\in \Aut(\calS_n), \text{ such that }x^g=x,y^g=y^{-1}\right\}. 
\end{align*}
Note that the proportion $\PP_{ch}(n)$ of chiral maps with automorphism group $\calS_n$ is  equal to $1-\frac{|\calR_n|}{|\calG_n|}$.
Then Theorem~\ref{main-thm-1} is equivalent to 
\begin{equation}
    \frac{|\calR_n|}{|\calG_n|} \in O(4^n\cdot n^{-0.25n}). \label{eqn: ref-prop-sn}
\end{equation}
In order to prove Equation~\ref{eqn: ref-prop-sn}, we define the set of extended reflexible triples $\excalR_n$ as follows,
 \[\excalR_n:=\left\{(g,x,y)\in \calI_n\times\calI_n\times\calS_n| x^g=x,y^g=y^{-1}\right\}. \]
The following lemma gives the relation between  $|\excalR_n|$ and  $|\calR_n |$.

\begin{lemma}\label{21}
Let $n>6$ be an integer and suppose that $(x,y)\in \calR_n$. Then there exists $g\in \calI_n$ such that $x^g=x,y^g=y^{-1}$. 
Moreover, we have $$|\calR_n|\leq |\excalR_n|=n!\cdot (I(n)+1)\cdot \sum_{q=1}^{\lfloor 0.5n\rfloor} S(n,q),$$
where $S(n,q)=\frac{|\calI_n\cap \C_{\calS_n}(\sigma_q)|}{|\C_{\calS_n}(\sigma_q)|},$ $\sigma_q=(1,2)(3,4)\dots (2q-1,2q)$ for $1\leq q \leq \lfloor \frac{n}{2}\rfloor$.
% Then $g$ is unique. Furthermore, $g$ is an involution.
\end{lemma}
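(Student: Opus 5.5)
The proof has three steps: use the hypothesis $n>6$ to make every automorphism inner, show the conjugating element is an involution, and then count the triples in $\excalR_n$ by conjugacy class of $g$.

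\emph{Step 1 (inner automorphisms).} For $n>6$ we have $\Aut(\calS_n)=\mathrm{Inn}(\calS_n)\cong\calS_n$. So if $(x,y)\in\calR_n$, the automorphism in the definition is conjugation by some $g\in\calS_n$ with $x^g=x$ and $y^g=y^{-1}$.

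\emph{Step 2 ($g$ is an involution).} We have $x^{g^2}=x$ and $y^{g^2}=(y^{-1})^{g}=(y^g)^{-1}=y$. Hence $g^2$ centralizes $\langle x,y\rangle=\calS_n$, and the centre of $\calS_n$ is trivial, so $g^2=1$. If $g=1$, then $y=y^{-1}$, so $\langle x,y\rangle$ is generated by two involutions and is dihedral. That contradicts $\langle x,y\rangle=\calS_n$ for $n>6$ (indeed for $n\ge 4$). Thus $g\in\calI_n$.

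\emph{Step 3 (counting).} The map $(g,x,y)\mapsto(x,y)$ sends $\{(g,x,y)\in\excalR_n:\langle x,y\rangle=\calS_n\}$ onto $\calR_n$, which gives $|\calR_n|\le|\excalR_n|$. To compute $|\excalR_n|$, I split according to the conjugacy class of $g$. The involutions with $q$ transpositions form the class of $\sigma_q$, of size $n!/|\C_{\calS_n}(\sigma_q)|$. By conjugation-invariance, the number of admissible $(x,y)$ is the same for every $g$ in this class, so it suffices to take $g=\sigma_q$.

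For $x$, we need $x\in\calI_n\cap\C_{\calS_n}(\sigma_q)$. For $y$, the condition $y^{\sigma_q}=y^{-1}$ means $(\sigma_q y)^2=1$, i.e.\ $\sigma_q y\in\calI_n\cup\{1\}$; so there are exactly $I(n)+1$ choices of $y$, independent of $q$. Therefore
\[
|\excalR_n|=\sum_{q=1}^{\lfloor n/2\rfloor}\frac{n!}{|\C_{\calS_n}(\sigma_q)|}\,|\calI_n\cap\C_{\calS_n}(\sigma_q)|\,(I(n)+1),
\]
which is the stated formula $n!\,(I(n)+1)\sum_{q}S(n,q)$.

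There is no real obstacle here. The only points needing care are the use of $n>6$ (to exclude outer automorphisms, including the exceptional one at $n=6$), and the observation that $y\mapsto\sigma_q y$ is a bijection from the admissible $y$ onto $\calI_n\cup\{1\}$.
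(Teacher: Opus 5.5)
Your proof is correct and follows the paper's argument essentially step for step. The steps are the same: inner automorphisms for $n>6$, $g^2$ centralizing $\langle x,y\rangle=\calS_n$ and hence trivial, and counting $\excalR_n$ by the conjugacy class of $g$ using $y^g=y^{-1}\iff(gy)^2=1$. Your explicit exclusion of $g=1$, which would make $\langle x,y\rangle$ dihedral, is a detail the paper's proof leaves unstated; it is genuinely needed, since $g$ must lie in $\calI_n$ and the sum starts at $q=1$.
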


\begin{proof}
 Suppose $(x,y)\in \calR_n.$ There exists 
 $\phi\in \Aut(\calS_n)$ such that $x^\phi=x$ and $y^\phi=y^{-1}.$ Since $n>6$, we have $\Aut(\calS_n)\cong \calS_n.$ In other words, every automorphism of the group $\calS_n$ is given by an inner automorphism. Hence there exists $g\in \calS_n$ such that $gtg^{-1}=t^\phi$ for any $t\in \calS_n.$ In particular, we have  $gxg^{-1}=x$ and $gyg^{-1}=y^{-1}$. Thus $g^2x(g^2)^{-1}=x$ and $g^2 y(g^2)^{-1}=y$, which implies $g^2\in \C_{\calS_n}(x)\cap \C_{\calS_n}(y).$ Since $\l x,y\r=\calS_n$, we have
$g^2\in  \Z(\calS_n).$ Thus $g^2=1$. This completes the proof of the first part of the lemma.

Now we prove the second part of this lemma. Note that $|\calR_n|\leq |\excalR_n|$ holds by the definition of 
$\excalR_n$ and the first part of this lemma. Moreover,
for any $g\in \mathcal{I}_n$ with $q$ cycles, the number of 
elements $y\in \mathcal{S}_n$ such that $gyg^{-1}=y^{-1}$ is exactly the number of elements $y\in \mathcal{S}_n$ such that $gy$ has order $1$ or $2$. Therefore, the number of elements $y\in \mathcal{S}_n$ such that $gyg^{-1}=y^{-1}$ is $I(n)+1$. 
The number of elements $x\in \mathcal{I}_n$ such that $gxg^{-1}=x$
is $|\calI_n\cap \C_{\calS_n}(\sigma_q)|$. By Burnside's lemma, there are $\frac{n!}{|\C_{\calS_n}(\sigma_q)|}$ involutions in $\mathcal{S}_n$ that have $q$ cycles. So we have
\begin{align*}
|\excalR_n|&=\sum_{q=1}^{\lfloor 0.5n\rfloor}\frac{n!}{|\C_{\calS_n}(\sigma_q)|}(I(n)+1)|\calI_n\cap \C_{\calS_n}(\sigma_q)|\\
&=n!\cdot (I(n)+1)\cdot \sum_{q=1}^{\lfloor 0.5n\rfloor} S(n,q).
\end{align*}
This proves the second part of the lemma.
\end{proof}

The following two lemmas aim to give an upper bound for $S(n,q)$.

\begin{lemma}\label{rnex}
 For $1\leq q\leq \lfloor \frac{n}{2}\rfloor$, the following inequality holds:
 $$S(n,q)<[x^q]e^{x+0.25x^2}[x^{n-2q}]e^{x+0.5x^2}.$$
\end{lemma}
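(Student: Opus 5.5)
The plan is to compute $S(n,q)$ exactly from the structure of the centralizer and then bound it termwise by the generating-function coefficients. The centralizer $C=\C_{\calS_n}(\sigma_q)$ is $(\calS_2\wr\calS_q)\times \calS_{n-2q}$: the first factor acts on the $2q$ points moved by $\sigma_q$ and permutes the $q$ transpositions, while $\calS_{n-2q}$ acts on the fixed points. Thus $|C|=2^q q!\,(n-2q)!$. Since $C$ is a direct product, an element of $C$ of order dividing $2$ is a pair $(u,v)$ with $u^2=1$ in $\calS_2\wr\calS_q$ and $v^2=1$ in $\calS_{n-2q}$. Hence
\[
|\calI_n\cap C|+1 = \bigl(1+|\calI\cap(\calS_2\wr\calS_q)|\bigr)\bigl(1+I(n-2q)\bigr),
\]
and dropping the $-1$ gives a strict upper bound.

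The second factor equals $(n-2q)!\,[x^{n-2q}]e^{x+0.5x^2}$ by Lemma~\ref{basic fact}(2). Dividing by $(n-2q)!$ yields exactly the coefficient $[x^{n-2q}]e^{x+0.5x^2}$.

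For the wreath factor, count elements of order dividing $2$ in $\calS_2\wr\calS_q$. I would do this as in the proof of Lemma~\ref{defimp}, now with $d=2$. Such an element induces on the $q$ blocks a permutation that swaps $p$ pairs of blocks and fixes $q-2p$ blocks.
\begin{itemize}
\item Each swapped pair is determined by a bijection between the two blocks, giving $2!=2$ choices.
\item Each fixed block carries either the identity or the transposition, giving $1+I(2)=2$ choices.
\item The $p$ pairs of blocks can be chosen in $\frac{q!}{(q-2p)!\,2^p p!}$ ways.
\end{itemize}
The count is therefore
\[
\sum_{p}\frac{q!}{(q-2p)!\,2^pp!}\,2^p\,2^{q-2p}=q!\,2^q\sum_p\frac{0.25^p}{(q-2p)!\,p!}=2^q q!\,[x^q]e^{x+0.25x^2}.
\]
Dividing by $2^qq!$ gives $[x^q]e^{x+0.25x^2}$.

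Multiplying the two normalized factors and using the strict inequality from discarding the identity gives the claim. The main thing to check carefully is the wreath-product count: that the structure of involutions in $\calS_2\wr\calS_q$ (block swaps versus fixed blocks) is matched correctly by the factors $2^p$ and $2^{q-2p}$, so that the series collapses to $e^{x+0.25x^2}$. The case $q=n/2$, where $n-2q=0$, should also be checked; there $[x^0]=1$, which is consistent.
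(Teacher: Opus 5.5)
Your proof is correct and takes essentially the same route as the paper. You identify $\C_{\calS_n}(\sigma_q)$ with $(\calS_2\wr\calS_q)\times\calS_{n-2q}$, count its elements of order dividing $2$ factor by factor ($2^q q!\,[x^q]e^{x+0.25x^2}$ via the block-swap count, and $(n-2q)!\,[x^{n-2q}]e^{x+0.5x^2}$ via Lemma~\ref{basic fact}(2)), and drop the $-1$ for the identity to get the strict inequality; the paper writes the wreath count as $q![x^q]e^{x^2+2x}$, which is the same quantity.
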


\begin{proof}
By definition and Burnside's Lemma, 
\begin{equation}\label{snq}
S(n,q)=\frac{|\calI_n\cap \C_{\calS_{n}}(\sigma_q)|}{|\C_{\calS_n}(\sigma_q)|}=
\frac{|\calI_n\cap \C_{\calS_{n}}(\sigma_q)|}{(n-2q)!2^q q!}.
\end{equation}
Now we calculate $|\calI_n\cap \C_{\calS_{n}}(\sigma_q)|.$
Note that $|\calI_n\cap \C_{\calS_{n}}(\sigma_q)|$ equals the number of involutions in the group $(\Z_2\wr \calS_q)\times \calS_{n-2q}.$  The number of involutions in $\calS_{n-2q}$ is
$I(n-2q)$, while the number of involutions in $\Z_2\wr \calS_q$ is
\begin{align*}&\sum_{t=0}^{\lfloor\frac{q}{2}\rfloor}\frac{q!}{(q-2t)!2^tt!}\cdot 2^t\cdot 2^{q-2t}-1\\
=&q!  2^q \sum_{t=0}^{\lfloor\frac{q}{2}\rfloor}\frac{0.25^t}{(q-2t)!t!}-1\\
=&q!  2^q [x^{q}]e^{x+0.25x^2}-1=q![x^q]e^{x^2+2x}-1.
\end{align*}
Hence we obtain that the number of
involutions in the group $(\Z_2\wr \calS_q)\times \calS_{n-2q}$
is 
\begin{align*}
|\calI_n\cap \C_{\calS_{n}}(\sigma_q)|&=q![x^q]e^{x^2+2x}\Big(I(n-2q)+1\Big)-1\\
&=q![x^q]e^{x^2+2x}(n-2q)![x^{n-2q}]e^{x+0.5x^2}-1\\
&<q!(n-2q)![x^q]e^{x^2+2x}[x^{n-2q}]e^{x+0.5x^2}.
\end{align*}
By Equation~(\ref{snq}), we obtain
\begin{align*}
S(n,q)&<\frac{q!(n-2q)![x^q]e^{x^2+2x}[x^{n-2q}]e^{x+0.5x^2}}{(n-2q)!2^q q!}\\
&=[x^q]e^{0.25x^2+x}[x^{n-2q}]e^{x+0.5x^2}.
\end{align*}
This completes the proof.
\end{proof}

\begin{lemma}
    \label{lem:upper-bound-S(n,q)}
    For $1\leq q\leq \lfloor \frac{n}{2}\rfloor$, we have 
    \[ S(n,q) < \left( \frac{2}{9}\right)^q\cdot  \frac{3^{1.25n} }{n^{0.25n}}.\]
\end{lemma}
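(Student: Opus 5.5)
Combining Lemma~\ref{rnex} with Lemma~\ref{216} gives
\[
S(n,q) < [x^q]e^{x+0.25x^2}\,[x^{n-2q}]e^{x+0.5x^2} \le \frac{2^q}{q^{0.5q}}\cdot\frac{3^{n-2q}}{(n-2q)^{0.5(n-2q)}},
\]
where we interpret the second factor as $1$ when $n=2q$; in that case $[x^0]e^{x+0.5x^2}=1$, so the bound is still valid. Writing $3^{n-2q}=3^n\cdot 9^{-q}$, the right-hand side becomes $(2/9)^q\,3^n\, q^{-0.5q}(n-2q)^{-0.5(n-2q)}$. It therefore suffices to show
\[
q^{0.5q}\,(n-2q)^{0.5(n-2q)} \ \ge\ \frac{n^{0.25n}}{3^{0.25n}} = \Bigl(\frac{n}{3}\Bigr)^{0.25n},
\]
which is equivalent to $q^{2q}(n-2q)^{2(n-2q)}\ge (n/3)^n$.

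To prove this, take logarithms. Set $a=q$ and $b=n-2q$, so $2a+b=n$, and consider $h=2a\ln a+2b\ln b$, which is convex. Rewrite it as $h=a\ln(a^2)+2b\ln b$ and apply the weighted log-sum inequality, i.e.\ Jensen for $t\ln t$. Think of $n=a+a+b$ as three blocks of sizes $a,a,b$. Then $2a\ln a+b\ln b\ge n\ln(n/3)$, because $\sum_i s_i\ln s_i\ge (\sum_i s_i)\ln(\sum_i s_i/3)$ for three nonnegative numbers $s_i$. Since $2b\ln b\ge b\ln b$ whenever $b\ge 1$, this gives $h\ge n\ln(n/3)$, which is exactly the required inequality.

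The edge cases need separate attention. If $b=0$, the term $b\ln b$ is read as $0$, and the Jensen bound with only two blocks gives $2a\ln a=n\ln(n/2)\ge n\ln(n/3)$. If $b\ge1$ but $b\ln b<0$, this cannot happen since $\ln b\ge 0$. Hence $2b\ln b\ge b\ln b$ always holds, and the desired bound on $S(n,q)$ follows.

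The main point requiring care is making sure the convexity step is stated cleanly with the convention $0\ln 0=0$, and checking that Lemma~\ref{216} still applies for $q\ge1$ (it is stated for positive $n$; the case of coefficient index $0$ is trivial). Apart from that, everything reduces to substitution.
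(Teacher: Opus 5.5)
Your proof is correct and follows the paper's argument: you combine Lemma~\ref{rnex} with Lemma~\ref{216}, then apply Jensen for $t\ln t$ at the points $q,q,n-2q$ (the paper phrases this with weights $\tfrac23,\tfrac13$), after discarding the nonnegative term $\tfrac12(n-2q)\ln(n-2q)$, which is exactly your step $2b\ln b\ge b\ln b$. Your explicit handling of the case $n=2q$ is a small improvement, since the paper defines $f$ only for $x\ge 1$ and passes over that case without comment.
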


\begin{proof}
    By Lemma~\ref{rnex}, we have $S(n,q)<[x^q]e^{x+0.25x^2}[x^{n-2q}]e^{x+0.5x^2}$. It follows from Lemma~\ref{216} that
    \begin{align*}
        S(n,q) & < \frac{2^q}{q^{0.5q}} \cdot \frac{3^{n-2q}}{(n-2q)^{0.5(n-2q)}}=\left( \frac{2}{9}\right)^q\cdot 3^n \cdot \frac{1}{q^{0.5q}(n-2q)^{0.5(n-2q)}}.
    \end{align*}
    Let $f(x) = \frac{1}{2} x \ln x$ for $x \ge 1$. From its derivative, we can see that $f(x)$  is a strictly increasing, positive, and convex function. By Jensen's inequality, we have
    \begin{align*}
        f(q)+f(n-2q)&\geq \frac{3}{2}\left( \frac{2}{3} f(q)+\frac{1}{3}f(n-2q)\right) \geq \frac{3}{2} f\left(\frac{n}{3}\right).
    \end{align*}
    This gives $q^{0.5q}(n-2q)^{0.5(n-2q)}=e^{f(q)+f(n-2q)} \geq e^{1.5 f\left(\frac{n}{3}\right)}=\left(\frac{n}{3}\right)^{ 0.25n}$, and 
    \begin{align*}
        S(n,q) &<\left( \frac{2}{9}\right)^q\cdot 3^{n} \cdot \frac{1}{\left(\frac{n}{3}\right)^{0.25n}}=\left( \frac{2}{9}\right)^q\cdot  \frac{3^{1.25n} }{n^{0.25n}}. 
    \end{align*}
    This completes the proof.
\end{proof}

Now, we are ready to give the proof of Theorem~\ref{main-thm-1}.

\begin{proof}[Proof of Theorem~\ref{main-thm-1}]
By Theorem~\ref{thm-byproduct}, there exists an integer $N_1$ such that for any $n>N_1$, 
\[ |\calG_n| > (0.75-0.001) \cdot |\calI_n|\cdot |\calS_n| \hspace{3ex} \text{and} \hspace{3ex}  (I(n)+1)< (1+0.001) I(n). \]
Then, from Lemma~\ref{21}, we have
\[\frac{|\excalR_n|}{|\calG_n|} \leq \frac{1}{0.749} \cdot \frac{n!\cdot (I(n)+1)\cdot \sum_{q=1}^{\lfloor 0.5n\rfloor} S(n,q)}{|\calI_n|\cdot |\calS_n|} <\frac{1.001}{0.749} \cdot \sum_{q=1}^{\lfloor 0.5n\rfloor} S(n,q).\]
Applying Lemma~\ref{rnex} yields that
\begin{equation}\label{eqn:rnex/gn}
    \frac{|\excalR_n|}{|\calG_n|} < \frac{1.001}{0.749} \cdot 
\sum_{q=1}^{\lfloor 0.5n\rfloor} \left( \frac{2}{9}\right)^q\cdot  \frac{3^{1.25n} }{n^{0.25n}} <\frac{1.001}{0.749} \cdot \frac{2}{7}\cdot  \frac{(3^{1.25})^{n} }{n^{0.25n}}<0.4\cdot 4^n \cdot n^{-0.25n}. 
\end{equation}
Note that $\PP_{\rm ch}(\calS_n)=1-\frac{|\calR_n|}{|\calG_n|} > 1-\frac{|\excalR_n|}{|\calG_n|}$. This gives 
\[1 - \mathbb{P}_{ch}(\mathcal{S}_n) \in O\!\left(4^n \cdot n^{- 0.25n}\right) \hspace{3ex} \text{and} \hspace{3ex}\lim_{n\to \infty} \PP_{\rm ch}(\calS_n) =1. \]

The results for $\PP_{\rm ch}(\calA_n)$ can be derived from those for $\PP_{\rm ch}(\calS_n)$. Let $\calG_n^\calA$ be the set of $(2,*)$-generating pairs of $\calA_n$ and let $\calR_n^\calA$ be the set of 
 $(2,*)$-generating pairs that correspond to reflexible maps. That is 
\begin{align*}
    \calG_n^\calA& =\left\{ 
(x,y)\in \mathcal{I}_n \times \mathcal{A}_n|\langle x,y\rangle =\mathcal{A}_n\right\},\\
\calR_n^\calA&=\left\{ 
(x,y)\in \calG_n^\calA|\text{there exists } g\in \Aut(\calA_n), \text{ such that }x^g=x,y^g=y^{-1}\right\}. 
\end{align*}
Note that, for $n\geq 6$, $\Aut(\calA_n)\cong \calS_n$. By the same argument used in Lemma~\ref{21}, we have $|\calR^\calA_n| \leq |\excalR_n| $. 
By Theorem~\ref{thm-byproduct}, there exists an integer $N>\max\{ N_1,6\}$, such that for any $n\geq N$, 
\[ \frac{|\calG_n|}{|\calI_n|\cdot |\calS_n|}<0.75+0.001 \hspace{3ex} \text{and} \hspace{3ex} \frac{|\calG_n^\calA|}{|\calI_n|\cdot |\calS_n|}>0.25-0.001.\]
It follows from Inequality~\ref{eqn:rnex/gn} that 
\begin{align*}
    1-\PP_{\rm ch}(\calA_n) &= \frac{|\calR_n^\calA|}{|\calG_n^\calA|}< \frac{|\excalR_n|}{|\calG_n^\calA|}  =\frac{|\calI_n|\cdot |\calS_n|}{|\calG_n^\calA|} \cdot \frac{|\calG_n|}{|\calI_n|\cdot |\calS_n|} \cdot \frac{|\excalR_n|}{|\calG_n|} \\
    &<\frac{0.751}{0.249}\cdot  0.4\cdot 4^n \cdot n^{-0.25n}<1.3\cdot 4^n \cdot n^{-0.25n}.
\end{align*}
This gives 
\[1 - \mathbb{P}_{ch}(\mathcal{A}_n) \in O\!\left(4^n \cdot n^{- 0.25n}\right) \hspace{3ex} \text{and} \hspace{3ex}\lim_{n\to \infty} \PP_{\rm ch}(\calA_n) =1, \]
which completes the proof.
\end{proof}

\subsection{ Chiral hypermaps
% Estimation of \texorpdfstring{$\mathbb{P}_{ch\text{-}\mathcal{H}}(\mathcal{S}_n)$}{Pch-H(Sn)} and \texorpdfstring{$\mathbb{P}_{ch\text{-}\mathcal{H}}(\mathcal{A}_n)$}{Pch-H(An)}
} \ 

As mentioned in the Introduction, there is an analogous theory of hypermaps to the theory of maps. To consider the proportion of chiral hypermaps with automorphism group $G$, we only need to compute the proportion of $(*,*)$-generating pairs of $G$ that can be reversed by a group automorphism. More specifically, we need to compute the following proportion $\PP_{ch\text{-}\calH}(G)=1-|\calHR(G)|/|\calHG(G)|$, where $\calHG(G)$ is the set of $(*,*)$-generating pairs of $G$ and 
\begin{align*}
    % \calHG(G)&=\{(x,y)\in G\times G\mid \l x,y\r =G\},\\
    \calHR(G)&=\left\{ (x,y)\in \calHG(G)\mid \text{there exists } g\in \Aut(G), \text{ such that } x^g=x^{-1},y^g=y^{-1}\right\}.
\end{align*}

\begin{proof}[Proof of Theorem~\ref{main-thm-2}]
    Let $G=\calA_n$ or $\calS_n$ for $n>6$. Then the automorphism group of $G$ are induced by conjugation action of $\calS_n$, and $G$ can be embedded into $\calS_n$ naturally.
For any pair $(x,y)\in \calHR(G)$, by the same argument as in Lemma~\ref{21},
there exists a unique involution $g=g_{x,y}\in  \calS_n$ such that $x^g=x^{-1}$ and $y^g=y^{-1}$. Moreover, $(xg)^2=(yg)^2=1$. Now, set $\overline{\calI}_n=\calI_n\cup \{1\}$, and
% and let
% \[ \mathcal{S}=\calI_n\times \overline{\calI}_n\times \overline{\calI}_n.\]
define a function 
\[
\begin{array}{rrcl}
   \Phi:  & \calHR(G) & \to & \calI_n\times \overline{\calI}_n\times \overline{\calI}_n\\
     & (x,y ) &\mapsto & (g_{x,y}, xg_{x,y}, yg_{x,y}). 
\end{array}\]
Obviously, $\Phi$ is an injection. Hence $|\calHR(G)|\leq |\calI_n\times \overline{\calI}_n\times \overline{\calI}_n| < (I(n)+1)^3  $. 
By \cite[Corollary]{dixon-1969}, there exists an integer $N_1>6$ such that for any $n>N_1$, $$\calHG(\calS_n)>(0.75-0.001)\cdot (n!)^2 \hspace{3ex}\text{and}\hspace{3ex} \calHG(\calA_n)>(1-0.001)\cdot (\frac{n!}{2})^2$$
Note that, by Lemma~\ref{lem:stirling}, Lemma~\ref{basic fact} and Lemma~\ref{216},
\begin{align*}
    \frac{(I(n)+1)^3}{(n!)^2} & =\frac{(n! [x^n]e^{x+0.5x^2})^3}{(n!)^2}=n! ([x^n]e^{x+0.5x^2})^3< \frac{1.1\sqrt{2\pi n}n^n}{e^n} \cdot (\frac{3^n}{n^{0.5n}})^3  \\
    &=(1.1\cdot \sqrt{2\pi})\cdot n^{0.5-0.5n}\cdot (\frac{3^3}{e})^n <2.8 \cdot n^{0.5-0.5n} \cdot 10^n .
\end{align*}
Thus, we have 
\begin{align*}
    1-\PP_{ch\text{-}\calH}(\calS_n) &=\frac{|\calHR(\calS_n)|}{|\calHG(\calS_n)|}<\frac{1}{0.749}\cdot 2.8 \cdot n^{0.5-0.5n} \cdot 10^n , \hspace{3ex} \text{and} \\
    1-\PP_{ch\text{-}\calH}(\calA_n) &=\frac{|\calHR(\calA_n)|}{|\calHG(\calA_n)|}<\frac{4}{0.999}\cdot 2.8 \cdot n^{0.5-0.5n} \cdot 10^n .    
\end{align*}
This gives 
\[1 - \PP_{ch\text{-}\calH}(\calS_n),  1 - \PP_{ch\text{-}\calH}(\calA_n)\in O\!\left(n^{0.5-0.5n} \cdot 10^n\right),  \]
which completes the proof.
\end{proof}

\vskip0.2in
\thanks{{\bf Acknowledgements.}
	% The authors are grateful to the anonymous referees for their valuable comments and suggestions that have helped to improve the paper.
    The authors would like to thanks Dr. Hongyi Huang and Dr. Yan Zhou Zhu for their valuable comments and helpful discussion. 
	This work was supported by the Fundamental Research Funds for the Central Universities (No. 20720240136). 
	}

\end{document}